\pgfplotsset{compat=1.14}
\newcommand{\map}[3]{ #1 \colon #2 \to #3 }
\newcommand{\wildcard}{\makebox[\widthof{$x$}][c]{$\cdot$}}
\newcommand{\mc}[1]{{\mathcal{#1}}}
\newcommand{\R}{ \mathbb{R} }
\newcommand{\N}{ \mathbb{N} }
\DeclarePairedDelimiter{\parn}{\lparen}{\rparen}     
\DeclarePairedDelimiter{\brac}{\lbrace}{\rbrace}    
\DeclarePairedDelimiter{\brak}{\lbrack}{\rbrack}    
\DeclarePairedDelimiter{\angl}{\langle}{\rangle}    
\DeclarePairedDelimiter{\abs}{\lvert}{\rvert}       
\DeclarePairedDelimiter{\norm}{\lVert}{\rVert}      
\DeclareMathOperator{\img}{im}
\DeclareMathOperator{\ind}{ind}
\DeclareMathOperator{\sgn}{sgn}
\DeclareMathOperator{\lin}{Lin}
\DeclareMathOperator{\lspan}{span}
\theoremstyle{plain}
\newtheorem{theorem}{Theorem}
\newtheorem{lemma}[theorem]{Lemma}
\newtheorem{proposition}[theorem]{Proposition}
\theoremstyle{definition}
\theoremstyle{remark}
\newtheorem{remark}[theorem]{Remark}
\newtheorem{assumption}[theorem]{Assumption}
\crefname{assumption}{assumption}{assumptions}   
\crefname{enumi}{part}{parts}   
\title[Global bifurcation of waves with critical layers]{Global bifurcation of waves with multiple critical layers}
\author{Kristoffer Varholm}
\address{Department of Mathematical Sciences, Norwegian University of Science and Technology, 7491 Trondheim, Norway}
\email{kristoffer.varholm@ntnu.no}
\thanks{The author acknowledges the support of the projects \emph{Nonlinear Water Waves} (grant no. 231668) and \emph{Waves and Nonlinear Phenomena} (grant no. 250070), from the Research Council of Norway. They also wish to thank Gabriele Brüll, Mats Ehrnström, Eugen Varvaruca, Erik Wahlén, and Miles Wheeler for fruitful conversations concerning aspects of the work.}
\subjclass[2010]{Primary 35Q31; Secondary 35B32, 35C07, 76B15}
\begin{document}
\begin{abstract}
    Analytic global bifurcation theory is used to construct a large variety of families of steady periodic two-dimensional gravity water waves with real-analytic vorticity distributions, propagating in an incompressible fluid. The waves that are constructed can possess an arbitrary number of interior stagnation points in the fluid, and corresponding critical layers consisting of closed streamlines. This is made possible by the use of the so-called naive flattening transform, which has previously only been used for local bifurcation.
\end{abstract}
\maketitle
\section{Introduction}
    In this paper, our concern shall be two-dimensional traveling water waves, propagating in an inviscid and incompressible fluid of finite depth atop a flat bed. The waves will be purely gravitational --- that is, we neglect the influence of surface tension. Moreover, we make the assumption that the waves have not overturned, meaning that the free surface can be described as the graph of a function, which we shall call $\map{\eta}{\R}{\R}$ in the sequel.

    The steady-frame fluid domain, stationary with respect to the wave, will be denoted by
    \[
        \Omega_\eta \coloneqq \brac*{ (x,y) \in \R^2 : 0 < y < d + \eta(x)},
    \]
    where $d > 0$ represents the unperturbed fluid depth; with $x$ marking the horizontal direction, and $y$ the vertical direction. Furthermore, we will write
    \[
        S_\eta \coloneqq \brac*{(x,d + \eta(x)) : x \in \R}
    \]
    for the free surface, and
    \[
        B \coloneqq \{(x,0) : x \in \R\}
    \]
    to signify the flat bed. These components of $\partial \Omega_\eta$ are assumed to be positively separated, with $S_\eta$ situated above $B$.

    The waves are required to satisfy the steady incompressible Euler equations
    \begin{subequations}
        \label{eq:steady_water_wave_problem}
        \begin{equation}
            \label{eq:incompressible_euler}
            \begin{gathered}
                \nabla \cdot \brak*{\parn*{u-ce_x} \otimes \parn*{u-ce_x}} + \nabla(p+gy)=0,\\
                \nabla \cdot u = 0,
            \end{gathered}
        \end{equation}
        in $\Omega_\eta$, where $\map{u}{\Omega_\eta}{\R^2}$ is the velocity field, and $\map{p}{\Omega_\eta}{\R}$ is the pressure. The constant $c > 0$ is the wave speed,  and $g > 0$ is known as the acceleration due to gravity, while $e_x \coloneqq (1,0)$ is the horizontal unit vector. We may interpret the individual equations in \eqref{eq:incompressible_euler} as representing conservation of momentum and mass, respectively.

        To finish the description of the governing equations for steady water waves, we also require boundary conditions: First, we have the \emph{kinematic} boundary conditions, which read
        \begin{equation}
            \label{eq:kinematic_surface}
            \eta^\perp \cdot (u-ce_x) = 0 \qquad \text{on $S_\eta$},
        \end{equation}
        and
        \begin{equation}
            \label{eq:kinematic_bed}
            e_y \cdot u = 0 \qquad \text{on $B$},
        \end{equation}
        naturally ``attaching'' the boundary of the fluid domain to the velocity field. Here, $\eta^\perp \coloneqq (-\partial_x \eta,1)$ yields the non-normalized normal vector on $S_\eta$ in terms of the surface profile $\eta$.

        The final boundary condition is the \emph{dynamic} boundary condition
        \begin{equation}
            \label{eq:dynamic}
            p = 0 \qquad \text{on $S_\eta$},
        \end{equation}
        ensuring that the pressure is continuous across the interface. This is where surface tension would have entered, had we not neglected it. Collectively, \eqref{eq:incompressible_euler}--\eqref{eq:dynamic} is known as the \emph{steady water-wave problem}.
    \end{subequations}

    Of particular interest to us are \emph{rotational} steady waves, for which the scalar circulation density
    \[
        \omega \coloneqq \nabla^\perp \cdot u, \qquad \text{where }\nabla^\perp \coloneqq (-\partial_y,\partial_x),
    \]
    does not vanish identically. This quantity is known as the \emph{vorticity} of the fluid. Conveniently, if a \emph{stream function} $\map{\psi}{\Omega_\eta}{\R}$ is introduced through $\nabla^\perp \psi \coloneqq u-ce_x$, then the identity
    \[
        \omega = \nabla^\perp \cdot \nabla^\perp \psi = \Delta \psi
    \]
    holds throughout $\Omega_\eta$.

    We can go further than this still: If we have a smooth solution of the steady water-wave problem \eqref{eq:steady_water_wave_problem}, and $c - u \cdot e_x =\partial_y \psi > 0$ in $\Omega_\eta$, then there exists a \emph{vorticity distribution} $\map{\gamma}{\R}{\R}$ such that
    \begin{subequations}
        \label{eq:water_wave_problem_stream}
        \begin{equation}
            \label{eq:helmholtz}
            \Delta \psi + \gamma(\psi) = 0 \qquad \text{in $\Omega_\eta$},
        \end{equation}
        see e.g. \cite[Section 2]{Constantin04Exact}. Of course, this merely constitutes a sufficient condition for $\gamma$ to exist. There is nothing preventing us from postulating the existence of a vorticity distribution, even when the hypothesis above fails in the presence of interior \emph{stagnation points} where $u-ce_x = \nabla^\perp \psi = 0$. Having access to a vorticity distribution is highly convenient mathematically.

        By employing elementary vector-calculus identities, we find that
        \[
            \nabla \cdot \parn*{\nabla^\perp \psi \otimes \nabla^\perp \psi} = \nabla \parn*{\frac{1}{2}\abs*{\nabla \psi}^2 + \Gamma(\psi)}, \qquad \Gamma(t) \coloneqq \int_0^t \gamma(s)\,ds,
        \]
        for solutions of \eqref{eq:helmholtz}. Combining this identity with \eqref{eq:incompressible_euler} and \eqref{eq:dynamic}, we are led to the free-surface \emph{Bernoulli equation}
        \begin{equation}
            \label{eq:free_surface_bernoulli_pre_scaling}
            \frac{1}{2} \abs*{\nabla \psi}^2 +g\eta = Q\qquad \text{on $S_\eta$},
        \end{equation}
        for some constant $Q > - gd$. Together with \eqref{eq:helmholtz} and the demand that
        \begin{align}
            \psi & = \mu \qquad \text{on $S_\eta$}, \label{eq:psi_surface} \\
            \psi & = \Upsilon \qquad \text{on $B$}, \label{eq:psi_bottom}
        \end{align}
        for two constants $\mu,\Upsilon \in \R$, which is the form that the kinematic boundary conditions \eqref{eq:kinematic_surface} and \eqref{eq:kinematic_bed} take for stream functions, these equations give rise to solutions of the steady water-wave problem in \eqref{eq:steady_water_wave_problem}.

    \end{subequations}

    If we appeal to the integral
    \[
        \Upsilon - \mu = \int_0^{d+\eta(x)} (u(x,y)\cdot e_x - c)\,dy,
    \]
    which is independent of the choice of $x \in \R$, we see that the difference between the constants in \eqref{eq:psi_surface} and \eqref{eq:psi_bottom} may be interpreted as a relative mass flux.

    \subsection{Previous work}
        The mathematical study of solutions to variants of the steady water-wave problem \eqref{eq:steady_water_wave_problem} has a rich and extensive history --- going back hundreds of years. Most of the earlier literature concerned irrotational waves, where the reader may find surveys such as \cite{Toland96Stokes,Groves04Steady} of interest. Comparatively, the study of \emph{rotational} steady waves specifically is much more recent, especially when they are allowed to stagnate. Rotation is of course ubiquitous in nature, essentially being induced whenever there are non-conservative forces at play, but also introduces new non-trivial mathematical challenges.

        One could argue that the first rotational water-wave result was the explicit infinite-depth Gerstner wave \cite{Gerstner09Theorie}; see also \cite{Constantin11Nonlinear} for a modern treatment, including recent developments in the field of nonlinear water waves more generally. However, a much more compelling case can be made for the doctoral thesis \cite{Dubreil-Jacotin34Sur}. There, Dubreil-Jacotin introduced the semi-hodograph transform for \eqref{eq:water_wave_problem_stream}, treating the stream function as the vertical variable. They subsequently used the transform in an existence theorem for small-amplitude periodic solutions, and it has since seen wide use and become a staple tool in the field. In particular, we must single out its use in the seminal paper \cite{Constantin04Exact}, which was the first large-amplitude result in the same setting.

        One significant downside of the semi-hodograph transform is that it precludes the presence of interior stagnation points, or their corresponding critical layers of closed streamlines, in the fluid. The reason for this is that $\psi_y$ must necessarily have a definite sign in order to enable the use of $\psi$ as a vertical variable. Therefore, if stagnation is a desired feature, a different way of dealing with the free boundary must be utilized.

        An early existence result for stagnant waves was \cite{Ehrnstroem08Linear}, furnishing \emph{linear} stagnant waves with constant vorticity. This paper would lead to the first nonlinear existence result in \cite{Wahlen09Steady}, where Wahlén constructed small-amplitude waves with one critical layer for constant $\gamma$. Instead of the semi-hodograph transform, they used the same naive flattening transform that we shall soon introduce (see \eqref{def:flattening_transform}). In the decade following this paper, there has been a flurry of activity concerning waves with stagnation points: In \cite{Constantin11Steady} a different approach than \cite{Wahlen09Steady} was used, restating the problem as a pseudodifferential equation using conformal mapping. While highly specialized for constant vorticity, the framework is elegant and potentially allows for overhanging waves. The authors of \cite{Constantin11Steady} would later go on to further develop this framework with Strauss in \cite{Constantin16Global}, establishing the existence of \emph{large} amplitude waves. We should also mention here that there is another global result in the presence of capillary effects \cite{Matioc14Global}.

        At the same time, there has been a parallel endeavor of considering more ``interesting'' vorticity distributions, expanding on the use of the flattening transform from \cite{Wahlen09Steady}. Even taking the step up to affine vorticity distributions \cite{Ehrnstroem11Steady,Ehrnstroem12Steady,Ehrnstroem15Trimodal,Aasen18Traveling} admits waves with an arbitrary number of critical layers. These works also provide results for bimodal \cite{Ehrnstroem11Steady,Aasen18Traveling}, or even trimodal \cite{Ehrnstroem15Trimodal} waves. Small-amplitude solutions for very general vorticity distributions $\gamma$ were examined in \cite{Kozlov14Dispersion}. See also \cite{Kozlov18N} for a recent result involving $n$-modal waves for ``almost''-affine vorticity distributions, giving a partial answer to a question posed in \cite{Ehrnstroem15Trimodal}.

        In this paper, our goal is to, in a sense, unite these two efforts: We show that the framework of \cite{Wahlen09Steady,Ehrnstroem11Steady} can be extended in such a way that it can be used for large-amplitude waves as well.

        There are, of course, several papers on stagnant waves that do not fit neatly into our categorizations above. For instance, a recent paper \cite{Kozlov19Small} constructs small-amplitude \emph{non-symmetric} waves with critical layers using a spatial-dynamics approach, as opposed to the bifurcation-theoretic nature of the above results. There is also a preprint \cite{Kozlov19Solitary} on solitary waves with constant vorticity and a critical layer connecting with the bed, again using spatial dynamics. Finally, there are several papers on solitary capillary-gravity waves with compactly supported vorticity \cite{Shatah13Travelling,Varholm16Solitary}, including two recent ones dealing with the stability of such waves \cite{Varholm20Stability,Le19Existence}. These waves are stagnant, but we remark that the waves with immersed point vortices do not possess vorticity distributions in the traditional sense.

    \subsection{Plan for this article}

        In \Cref{section:formulation}, we formulate the problem, and describe its linearization. Of noteworthy importance is the key generalization of the so-called $\mc{T}$-isomorphism from \cite{Ehrnstroem11Steady} to non-trivial solutions. \Cref{section:kernel_local_bifurcation} is used to study the kernel of the linearization, in particular resulting in \Cref{thm:kernel}. This section is also used to state our local bifurcation result, \Cref{thm:local_bifurcation}. Following this, we briefly discuss the special cases of constant and affine vorticity in \Cref{sec:explicit}. Finally, \Cref{section:global_bifurcation} extends the local curves to \emph{global} ones, which is the central event of this paper. Our main result here is \Cref{thm:global_bifurcation}, which concerns the global solution curves obtained by applying analytic global bifurcation theory. An interesting feature of its proof is the use of an alternative near-surface flattening to prove the necessary compactness.

\section{Formulation}
    \label{section:formulation}
    After a convenient choice of scaling, we may set the unperturbed depth and gravitational acceleration to $d = g \equiv 1$, switching out \eqref{eq:free_surface_bernoulli_pre_scaling} for
    \begin{equation}
        \label{eq:free_surface_bernoulli}
        \frac{1}{2} \abs*{\nabla \psi}^2 +\eta = Q\qquad \text{on $S_\eta$}, \tag{\ref{eq:free_surface_bernoulli_pre_scaling}$\star$}
    \end{equation}
    instead. At this point, we make a regularity assumption, the first half of which is necessary for us to be able to apply \emph{analytic} global bifurcation theory later.

    \begin{assumption}[Regularity of $\gamma$]
        \label{ass:gamma_regularity}
        The vorticity distribution $\map{\gamma}{\R}{\R}$ is real analytic, with bounded derivative.
    \end{assumption}

    The trivial solutions of \eqref{eq:water_wave_problem_stream} are those corresponding to parallel flows beneath a flat surface, and in particular those for which $\eta \equiv 0$. We \emph{define} the trivial stream function $\overline{\psi} = \overline{\psi}(\Lambda)$ to be the unique solution --- the existence of which is ensured by \Cref{ass:gamma_regularity} --- of the initial value problem
    \begin{equation}
        \label{eq:trivial_solution_definition}
        \begin{gathered}
            \overline{\psi}'' + \gamma(\overline{\psi}) = 0, \quad \text {for $y \in (0,1)$} \\
            \overline{\psi}(1)= \mu, \qquad \overline{\psi}'(1) = \lambda,
        \end{gathered}
    \end{equation}
    for $\Lambda \coloneqq (\mu,\lambda)$ in the set
    \begin{equation}
        \label{def:permissible_parameters}
        \mc{U} \coloneqq \{\underbrace{(\mu,\lambda)}_{\Lambda} \in \R^2 : \lambda \neq 0\}
    \end{equation}
    of permissible parameters. Here, the restriction on $\lambda$ ensures that there is no technically problematic surface stagnation present at the trivial solution. The corresponding values of $Q$ and $\Upsilon$ are determined from \eqref{eq:free_surface_bernoulli} and \eqref{eq:psi_bottom}, namely
    \begin{align}
        Q(\Lambda)        & = \frac{1}{2}\lambda^2, \label{eq:Q_value} \\
        \Upsilon(\Lambda) & = \overline{\psi}(0;\Lambda). \notag
    \end{align}
    We will often leave out explicit dependence on $\Lambda$ from our notation for readability, especially for $\overline{\psi}$.

    \begin{remark}
        We mention that the solutions of \eqref{eq:trivial_solution_definition} can be written down explicitly only for very special choices of $\gamma$, such as when the vorticity distribution is either constant or affine.
    \end{remark}

    By flattening the fluid domain through what we shall call the \emph{naive flattening transform} $\map{\Pi}{\Omega_\eta}{\Omega_0}$, defined by
    \begin{equation}
        \label{def:flattening_transform}
        \Pi(x,y) = \parn*{x,\frac{y}{1+\eta(x)}},
    \end{equation}
    the water wave problem in \eqref{eq:water_wave_problem_stream} becomes
    \begin{equation}
        \label{eq:water_wave_problem_flattened}
        \begin{aligned}
            \parn*{\partial_x - \frac{s\eta_x}{1+\eta}\partial_s}^2 \hat{\psi} + \frac{1}{(1+\eta)^2}\hat{\psi}_{ss} + \gamma(\hat{\psi}) & =0         &  & \text{in $\Omega_0$,} \\
            \frac{1+\eta_x^2}{2(1+\eta)^2}\hat{\psi}_s^2 + \eta                                                                           & = Q        &  & \text{on $S_0$,}      \\
            \hat{\psi}                                                                                                                    & = \mu      &  & \text{on $S_0$,}      \\
            \hat{\psi}                                                                                                                    & = \Upsilon &  & \text{on $B$,}
        \end{aligned}
    \end{equation}
    where $s$ is used to distinguish the vertical variable in the flattened domain. For notational simplicity, we will henceforth use $\Omega \coloneqq \Omega_0$ and $S \coloneqq S_0$.

    Write now
    \begin{equation}
        \label{eq:def_hat_psi}
        \hat{\psi} = \hat{\psi}(\hat{\varphi},\Lambda) \coloneqq \overline{\psi}(\Lambda) + \hat{\varphi},
    \end{equation}
    where $\hat{\varphi}$ is a disturbance from $\overline{\psi}$ that vanishes at both the bottom and surface. The trivial solution in this definition takes care of the Dirichlet boundary conditions in \eqref{eq:water_wave_problem_flattened}. We will typically use the notation $w = (\eta,\hat{\varphi})$, for the pairs living in the space
    \begin{equation}
        \label{def:X}
        X = X_1 \times \hat{X}_2 \coloneqq C_{\kappa,\textnormal{e}}^{2,\beta}(\R) \times \{\hat{\varphi} \in C_{\kappa,\textnormal{e}}^{2,\beta}(\overline{\Omega}) : \hat{\varphi}|_S=\hat{\varphi}|_B=0\},
    \end{equation}
    for some fixed Hölder exponent $\beta \in (0,1)$. Here, the subscripts denote $2\pi/\kappa$-periodicity and evenness in the horizontal direction, respectively.

    If we further define the open subset
    \begin{equation}
        \label{eq:open_set}
        \mc{O} \coloneqq \{(w,\Lambda) \in X \times \mc{U} : 1+\eta > 0,\sgn(\lambda)\hat{\psi}_s|_S > 0\}
    \end{equation}
    of $X \times \R^2$, we may define the \emph{analytic} map $\map{\mc{F}=(\mc{F}_1,\mc{F}_2)}{\mc{O}}{Y}$ by
    \begin{equation}
        \label{eq:mcF_definition}
        \begin{aligned}
            \mc{F}_1(w,\Lambda) & \coloneqq \frac{1+\eta_x^2}{2(1+\eta)^2}\parn{\hat{\psi}_s|_S}^2 + \eta - \frac{1}{2}\lambda^2,                                        \\
            \mc{F}_2(w,\Lambda) & \coloneqq\parn*{\partial_x - \frac{s\eta_x}{1+\eta}\partial_s}^2\hat{\psi} + \frac{1}{(1+\eta)^2}\hat{\psi}_{ss} + \gamma(\hat{\psi}),
        \end{aligned}
    \end{equation}
    where the natural codomain of $\mc{F}$ is the space
    \[
        Y = Y_1 \times Y_2 \coloneqq C_{\kappa,\textnormal{e}}^{1,\beta}(\R) \times C_{\kappa,\textnormal{e}}^{\beta}(\overline{\Omega}).
    \]
    Note in particular that $\Lambda \mapsto \overline{\psi}(\Lambda)$ defines an analytic map from $\R^2$ into the space $C^{2,\beta}([0,1]) \eqqcolon V$, by a simple argument involving the implicit function theorem applied to the map $\map{F}{V \times \R^2}{V}$ defined through
    \[
        F(\zeta,\Lambda)(s) \coloneqq \zeta(s) - \mu - \lambda(s-1) + \int_1^s \int_1^t \gamma(\zeta(r))\,dr\,dt.
    \]

    The idea of the definition in \eqref{eq:mcF_definition} is that it combines \eqref{eq:water_wave_problem_flattened} with \eqref{eq:def_hat_psi}, and the value of of $Q$ from \eqref{eq:Q_value}. Thus $(0,\Lambda)$ is a solution of the equation
    \begin{equation}
        \label{eq:F_zero}
        \mc{F}(w,\Lambda) =0
    \end{equation}
    in $\mc{O}$ for every $\Lambda \in \mc{U}$, by construction. Moreover, these are the only solutions with the flat surface $\eta \equiv 0$.

    \begin{remark}
        \label{rem:open_set}
        The last condition for membership in $\mc{O}$ ensures that there is no stagnation on the surface, and that
        \[
            \sgn(\hat{\psi}_s|_S) = \sgn(\lambda),
        \]
        whereupon the line segment between $(0,\Lambda)$ and $(w,\Lambda)$ is always contained in $\mc{O}$ for any $(w,\Lambda) \in \mc{O}$. A further implication is that the slice
        \begin{equation}
            \label{def:open_set_slice}
            \mc{O}_\lambda \coloneqq \{(w,\mu) \in X \times \R : (w,\mu,\lambda) \in \mc{O}\}
        \end{equation}
        is connected for any fixed $\lambda \neq 0$.
    \end{remark}

    Our objective from here on is to further investigate the solution set of \eqref{eq:F_zero} in $\mc{O}$. The first observation we make is that solutions of \eqref{eq:F_zero} are more regular than generic elements of $\mc{O}$, as a consequence of the following theorem:

    \begin{theorem}[Analyticity of solutions]
        \label{thm:analyticity_of_solutions}
        Suppose that $(w,\Lambda) \in \mc{O}$ is a solution of \eqref{eq:F_zero} under \Cref{ass:gamma_regularity}. Then:
        \begin{enumerate}[(i)]
            \item The surface profile $\eta$ is analytic.
            \item The stream function $\hat{\psi}$ extends to an analytic function on an open set containing $\overline{\Omega}$.
        \end{enumerate}
    \end{theorem}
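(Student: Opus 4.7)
The plan is to proceed in three stages. First, bootstrap from $C^{2,\beta}$ to $C^\infty$ by Schauder iteration: viewing $\mc{F}_2(w,\Lambda)=0$ as a uniformly elliptic quasilinear equation for $\hat\psi$ with coefficients depending smoothly on $(\eta,\eta_x)$, the Dirichlet condition on the flat bed immediately upgrades $\hat\psi$ in a collar of $B$, while near $S$ one couples $\hat\psi|_S=\mu$ with the Bernoulli relation $\mc{F}_1=0$. Using $\sgn(\lambda)\hat\psi_s|_S>0$, one can solve $\mc{F}_1=0$ for $\eta_x$ in terms of surface traces that have just gained a derivative from the elliptic step; iteration in the scale $C^{k,\beta}$ yields $\eta\in C^\infty(\R)$ and $\hat\psi\in C^\infty(\overline{\Omega})$.

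Second, I would establish analyticity of $\eta$ via a partial hodograph in the physical domain. Set $\psi\coloneqq\hat\psi\circ\Pi$ on $\Omega_\eta$; membership in $\mc{O}$ forces $\psi_y\ne0$ in a one-sided neighborhood of $S_\eta$, so the partial hodograph $(x,y)\mapsto(x,\psi(x,y))$ is a $C^\infty$ diffeomorphism of such a neighborhood onto a strip $\{(x,p):\mu-\delta<p<\mu\}$. Its inverse $y=h(x,p)$ then satisfies a quasilinear elliptic equation whose coefficients are real-analytic in $(h_x,h_p)$ and $\gamma$, while the Bernoulli relation becomes a real-analytic boundary condition on the fixed line $p=\mu$, together with $h(x,\mu)=1+\eta(x)$. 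The Kinderlehrer--Nirenberg theorem on boundary analyticity for nonlinear elliptic equations with analytic data then extends $h$ analytically across $p=\mu$, whence $\eta$ is real analytic on $\R$.

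Third, I would conclude analyticity of $\hat\psi$. With $\eta$ now analytic the flattening transform $\Pi$ is analytic, all coefficients of $\mc{F}_2$ are real-analytic on the fixed strip $\overline{\Omega}$, and the Dirichlet data on $S$ and $B$ are constant. Interior analyticity of $\hat\psi$ follows from Morrey's classical theorem on analyticity of solutions of semilinear elliptic equations with analytic nonlinearity, while analyticity up to $s=0$ and $s=1$ follows from the corresponding boundary version (Morrey--Nirenberg, or again Kinderlehrer--Nirenberg), giving an analytic extension of $\hat\psi$ to an open neighborhood of $\overline{\Omega}$. The main obstacle is the second stage: one must set up the partial hodograph carefully, verify uniform ellipticity in the new coordinates, and confirm that the transformed Bernoulli condition has a form to which a boundary-analyticity theorem applies. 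The other two stages are essentially standard applications of elliptic regularity theory.
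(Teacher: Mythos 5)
Your proposal follows essentially the same route as the paper, whose proof is only a pointer to \cite{Aasen18Traveling} together with Morrey's nonlinear analyticity theory: a partial hodograph near the surface (where $\hat{\psi}_s|_S\neq 0$ by membership in $\mc{O}$) to obtain analyticity of $\eta$ from a quasilinear elliptic problem with an analytic oblique boundary condition on a fixed strip, followed by analytic elliptic regularity for the semilinear equation on $\overline{\Omega}$ once the coefficients of the flattened operator are analytic. This is also exactly the mechanism the paper reuses in the proof of \Cref{thm:compactness}, via the systems \eqref{eq:system_for_h} and \eqref{eq:system_for_theta}.

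One detail in your first stage does not work as written: you cannot ``solve $\mc{F}_1=0$ for $\eta_x$,'' since the Bernoulli relation only determines $\eta_x^2$, and the square root degenerates at crests and troughs where $\eta_x=0$; moreover the coefficients of $\mc{F}_2$ involve $\eta_{xx}$, so Schauder iteration on \eqref{eq:mcF_definition} alone gains nothing until $\eta$ itself gains regularity. The correct fix is the one you already use in your second stage: perform any preliminary bootstrap in the hodograph variables (e.g.\ via the equation satisfied by $h_q$, as in \eqref{eq:system_for_theta}, whose boundary condition is the differentiated, genuinely oblique form of the Bernoulli relation), or simply invoke a boundary-analyticity theorem that applies at $C^{2,\beta}$ regularity. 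With that amendment the argument is complete and matches the paper's intended proof.
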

    \begin{proof}
        The proof is essentially the same as the one for \cite[Theorem 2.5]{Aasen18Traveling}, but using \emph{nonlinear} elliptic regularity theory instead of linear theory; see for instance~\cite{Morrey58Analyticity}.
    \end{proof}

    \subsection{Linearization around a solution}
        In preparation for both local and global bifurcation, we require the linearization of \eqref{eq:F_zero} around its solutions. It is straightforward, but laborious, to compute the partial derivatives
        \begin{align*}
            D_\eta \mc{F}_1 (w,\Lambda)H                     & =\parn*{1 -\frac{1+\eta_x^2}{(1+\eta)^3}\hat{\psi}_s^2}H + \frac{\eta_x\hat{\psi}_s^2}{(1+\eta)^2}H_x \\
            D_{\hat{\varphi}} \mc{F}_1 (w,\Lambda)\hat{\Phi} & = \frac{1+\eta_x^2}{(1+\eta)^2}\hat{\psi}_s\hat{\Phi}_s
        \end{align*}
        for $\mc{F}_1$, where restrictions to $S$ are implied, and similarly
        \begin{align*}
            D_\eta \mc{F}_2(w,\Lambda)H                     & =\begin{multlined}[t]
                \parn*{\frac{s\eta_{xx}\hat{\psi}_s}{(1+\eta)^2}-\frac{4s\eta_x^2\hat{\psi}_s}{(1+\eta)^3} + \frac{2s\eta_x\hat{\psi}_{xs}}{(1+\eta)^2}-2\frac{1+s^2\eta_x^2}{(1+\eta)^3}\hat{\psi}_{ss}}H\\
                + \parn*{\frac{4s\eta_x\hat{\psi}_s}{(1+\eta)^2}-\frac{2s\hat{\psi}_{xs}}{1+\eta} + \frac{2s^2 \eta_x\hat{\psi}_{ss}}{(1+\eta)^2}}H_x - \frac{s\hat{\psi}_s}{1+\eta} H_{xx}
            \end{multlined}                                                                                                                 \\
            D_{\hat{\varphi}} \mc{F}_2(w,\Lambda)\hat{\Phi} & = \parn*{\partial_x - \frac{s\eta_x}{1+\eta}\partial_s}^2 \hat{\Phi} + \frac{1}{(1+\eta)^2}\hat{\Phi}_{ss} + \gamma'(\hat{\psi}) \hat{\Phi}
        \end{align*}
        for $\mc{F}_2$. These expressions are of course valid for any $(w,\Lambda) \in \mc{O}$, regardless of whether this pair is a solution of \eqref{eq:F_zero}, but appear quite formidable. It turns out that \emph{if} $(w,\Lambda)$ is a solution of \eqref{eq:F_zero}, then $D_w\mc{F}(w,\Lambda)$ can be transformed into an operator that is easier to study.

        To that end, let us introduce the space
        \[
            X_2 \coloneqq \{\Phi \in C_{\kappa,\textnormal{e}}^{2,\beta}(\overline{\Omega}) : \Phi|_B=0\},
        \]
        which differs from $\hat{X}_2$, see \eqref{def:X}, only through the relaxation of the Dirichlet condition on $S$.
        The purpose of introducing this space is to ``encode'' both $H$ (i.e. capital $\eta$) and $\hat{\Phi}$ in a single variable.

        If $(w,\Lambda) \in \mc{O}$ is such that also $\eta \in C_{\kappa,\textnormal{e}}^{3,\beta}(\R)$ and $\hat{\varphi} \in C_{\kappa,\textnormal{e}}^{3,\beta}(\overline{\Omega})$, we may define a --- soon to be motivated --- bounded linear operator $\mc{L}(w,\Lambda) \in \lin(X_2,Y)$ by
        \begin{equation}
            \label{eq:L_definition}
            \begin{aligned}
                \mc{L}_1(w,\Lambda)\Phi & \coloneqq \frac{1+\eta_x^2}{(1+\eta)^2}\hat{\psi}_s \Phi_s + \parn*{\gamma(\mu) - \frac{1+\eta}{\hat{\psi}_s}}\Phi - \parn*{\frac{\eta_x\hat{\psi}_s}{1+\eta}\Phi}_x \\
                \mc{L}_2(w,\Lambda)\Phi & \coloneqq D_{\hat{\varphi}}\mc{F}_2(w,\Lambda)\Phi,
            \end{aligned}
        \end{equation}
        with the functions in the definition of $\mc{L}_1(w,\Lambda)$ evaluated on $S$. Furthermore, for $\mc{L}_2(w,\Lambda)$, we interpret $D_{\hat{\varphi}}\mc{F}_2(w,\Lambda)$ as extended to $X_2 \supset \hat{X}_2$ in the natural way. For convenience, we note that the operator defined in \eqref{eq:L_definition} simplifies to
        \begin{equation}
            \label{def:L_trivial}
            \begin{aligned}
                \mc{L}_1(\Lambda) & \coloneqq \mc{L}_1(0,\Lambda)\Phi = \lambda\Phi_s + \parn*{\gamma(\mu) - \frac{1}{\lambda}}\Phi \\
                \mc{L}_2(\Lambda) & \coloneqq \mc{L}_2(0,\Lambda)\Phi = (\Delta+\gamma'(\overline{\psi}))\Phi
            \end{aligned}
        \end{equation}
        at the trivial solutions.

        In particular, we have the required increased regularity for $\mc{L}(w,\Lambda)$ to be well defined when $(w,\Lambda)$ is a solution of \eqref{eq:F_zero}, due to \Cref{thm:analyticity_of_solutions}. Moreover, in this case we can relate $D_w\mc{F}(w,\Lambda)$ to the operator $\mc{L}(w,\Lambda)$, which is the reason for its introduction. This is done by utilizing a suitable generalization of the so-called $\mc{T}$-isomorphism from \cite{Ehrnstroem11Steady}.

        \begin{theorem}[$\mc{T}$-isomorphism]
            \label{thm:T_isomorphism}
            Suppose that $(w,\Lambda) \in \mc{O}$ satisfies \eqref{eq:F_zero}. Then
            \begin{equation}
                \label{eq:T_isomorphism}
                \mc{T}(w,\Lambda)\Phi \coloneqq \parn*{-\frac{1+\eta}{\hat{\psi}_s|_S}\Phi|_S,\Phi - \frac{s\hat{\psi}_s}{\hat{\psi}_s|_S} \Phi|_S}.
            \end{equation}
            defines an isomorphism $\map{\mc{T}(w,\Lambda)}{X_2}{X}$, and
            \begin{equation}
                \label{eq:L_DF_relationship}
                \mc{L}(w,\Lambda) = D_w\mc{F}(w,\Lambda)\mc{T}(w,\Lambda).
            \end{equation}
        \end{theorem}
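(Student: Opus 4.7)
The plan splits into two essentially independent parts: showing $\mc{T}(w,\Lambda)$ is an isomorphism, and verifying the operator identity \eqref{eq:L_DF_relationship}. The isomorphism is direct: I would write out $\mc{T}(w,\Lambda)\Phi = (H, \hat\Phi)$ with the given formulas and check that the coefficients $(1+\eta)/\hat\psi_s|_S$ and $\hat\psi_s/\hat\psi_s|_S$ are sufficiently smooth --- by the analyticity from \Cref{thm:analyticity_of_solutions} and the nonvanishing built into $\mc{O}$ --- and that the Dirichlet traces $\hat\Phi|_B = \hat\Phi|_S = 0$ hold by evaluation at $s = 0$ and $s = 1$. The explicit inverse $(H, \hat\Phi) \mapsto \Phi := \hat\Phi - \frac{s\hat\psi_s}{1+\eta}H$ then lands in $X_2$ and satisfies $\mc{T}(w,\Lambda)\Phi = (H, \hat\Phi)$ by substitution, using that $\Phi|_S = -\frac{\hat\psi_s|_S}{1+\eta}H$ owing to $\hat\Phi|_S = 0$.

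For the identity $\mc{L}(w,\Lambda) = D_w\mc{F}(w,\Lambda)\mc{T}(w,\Lambda)$, the conceptual content is that $\Phi$ encodes the Eulerian stream-function variation pulled back to $\Omega$ through the flattening $\Pi$: perturbing $(\eta, \hat\varphi)$ by $(H, \hat\Phi) = \mc{T}(w,\Lambda)\Phi$ induces the Eulerian variation $\Phi(x, y/(1+\eta))$ of $\psi(x,y) := \hat\psi(x, y/(1+\eta))$ on $\Omega_\eta$. The internal family $(\eta + \tau H,\, \hat\varphi + \tau \frac{s\hat\psi_s}{1+\eta}H)$ corresponds exactly to keeping this Eulerian $\psi$ fixed while only varying $\Pi$; since $\mc{F}_2 = 0$ is equivalent to the Eulerian PDE being satisfied by $\psi$ (which persists along the whole family), the cocycle identity
\[
    D_\eta\mc{F}_2(w,\Lambda) H + D_{\hat\varphi}\mc{F}_2(w,\Lambda)\!\left(\tfrac{s\hat\psi_s}{1+\eta}H\right) = 0
\]
holds at the solution. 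Substituting this, together with $\hat\Phi = \Phi + \frac{s\hat\psi_s}{1+\eta}H$, into the chain-rule expansion of $D_w\mc{F}_2\mc{T}(w,\Lambda)\Phi$ collapses it to $D_{\hat\varphi}\mc{F}_2(w,\Lambda)\Phi = \mc{L}_2(w,\Lambda)\Phi$, as desired.

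The Bernoulli component $\mc{L}_1$ is where the real computation lies. The analogous cocycle argument, applied to $\mc{F}_1$, produces the nonzero right-hand side $[\nabla\psi \cdot \partial_y\nabla\psi|_{S_\eta} + 1]H$, the Eulerian linearized Bernoulli reacting to a pure surface-motion perturbation. I would expand this in flattened coordinates using $\psi_y|_{S_\eta} = \hat\psi_s|_S/(1+\eta)$ and $\psi_x|_{S_\eta} = -\eta_x\hat\psi_s|_S/(1+\eta)$, the latter from differentiating $\hat\psi|_S \equiv \mu$ tangentially. Substituting $H = -\frac{1+\eta}{\hat\psi_s|_S}\Phi|_S$ into the chain-rule expansion of $D_w\mc{F}_1\mc{T}(w,\Lambda)\Phi$ and collecting $\Phi|_S$, $\Phi_x|_S$, $\Phi_s|_S$ contributions, the $\Phi_s|_S$ and $\Phi_x|_S$ terms match their counterparts in $\mc{L}_1$ directly. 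The main obstacle --- and the crux of the proof --- is showing that the residual $\Phi|_S$-coefficient vanishes; I expect this residue to be precisely $\mc{F}_2(w,\Lambda)|_{s=1} - \hat\psi_{xx}|_S$, with both quantities zero at the solution (the former by assumption, the latter since $\hat\psi \equiv \mu$ on $S$ forces all tangential derivatives to vanish). This cancellation closes the computation.
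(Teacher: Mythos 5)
Your overall architecture matches the paper's: an explicit inverse for $\mc{T}(w,\Lambda)$, the key identity $D_\eta\mc{F}_2(w,\Lambda)H + D_{\hat{\varphi}}\mc{F}_2(w,\Lambda)\bigl(\tfrac{s\hat{\psi}_s}{1+\eta}H\bigr)=0$ for the second component, and a direct computation closed by $\mc{F}_2(w,\Lambda)|_S=0$ for the first. Where you genuinely diverge is in how you justify the key identity (the paper's \eqref{eq:interesting_equation} with $\hat{f}=s\hat{\psi}_s/(1+\eta)$): the paper pulls it back to $\Omega_\eta$, reads off the unique candidate $f=y\psi_y/(1+\eta)$ from the $H_{xx}$-coefficients, and verifies the $H$-coefficient using $\Delta\psi+\gamma(\psi)=0$; you instead observe that $(\eta+\tau H,\hat{\varphi}+\tau\tfrac{s\hat{\psi}_s}{1+\eta}H)$ is, to first order in $\tau$, the reflattening of the \emph{fixed} Eulerian solution over the moved surface, so that $\mc{F}_2$ is constant along the family. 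This is a cleaner derivation that explains where $\hat{f}$ comes from, but to make it rigorous you must invoke the analytic extension of $\psi$ across $S_\eta$ from \Cref{thm:analyticity_of_solutions} \emph{and} note that $\Delta\psi+\gamma(\psi)=0$ persists on the extension by analytic continuation; otherwise ``the Eulerian PDE persists along the whole family'' is unjustified wherever $H>0$.

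The gap is in the Bernoulli component. Your Eulerian formula for the residual, $[\nabla\psi\cdot\partial_y\nabla\psi|_{S_\eta}+1]H$, is not correct: $\mc{F}_1$ is built from $\tfrac{1+\eta_x^2}{2}\psi_y^2$, the normal-derivative part of $\tfrac12\abs{\nabla\psi}^2$, and these agree only on the \emph{unperturbed} surface where $\psi_x+\eta_x\psi_y=0$. Differentiating $\tfrac{1+\eta_{\tau,x}^2}{2}\psi_y(x,1+\eta_\tau)^2+\eta_\tau$ along your family actually yields $\eta_xH_x\psi_y^2+(1+\eta_x^2)\psi_y\psi_{yy}H+H$, which contains an $H_x$ term absent from your expression and carries different second-derivative weights. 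More importantly, the crucial cancellation of the $\Phi|_S$-coefficient is only conjectured (``I expect this residue to be\dots''); this is precisely the step the paper must carry out, rewriting part of the $\Phi$-coefficient via $\mc{F}_2(w,\Lambda)|_S=0$ so that it combines with the $\Phi_x$ term into the total derivative $\bigl(\tfrac{\eta_x\hat{\psi}_s}{1+\eta}\Phi\bigr)_x$ appearing in $\mc{L}_1$ --- so the $\Phi_x$ term does not ``match directly'' either, but is absorbed into that derivative. Your guiding principles, namely using $\mc{F}_2|_S=0$ and $\hat{\psi}_{xx}|_S=0$, are the right ones, but the computation still has to be done before the proof is complete.
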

        \begin{proof}
            Inspired by the procedure that was presumably used to arrive at the $\mc{T}$-isomorphism for trivial solutions in \cite{Ehrnstroem11Steady}, we suppose the existence of some element $\hat{f} \in X_2$ which enjoys the property
            \begin{equation}
                \label{eq:interesting_equation}
                D_{\hat{\varphi}}\mc{F}_2(w,\Lambda)\parn[\big]{\hat{f}H} = -D_\eta \mc{F}_2(w,\Lambda)H
            \end{equation}
            for all $H \in C^{2,\beta}_{\kappa,\textnormal{e}}(\R)$, and which furthermore does not vanish at any point on the surface. Again, we view the derivative $D_{\hat{\varphi}}\mc{F}_2(w,\Lambda)$ as naturally extended to an operator on $X_2 \supset \hat{X}_2$. Let $g \coloneqq \hat{f}|_S$, which by supposition has a definite sign. Then the operator $\map{\mc{T}(w,\Lambda)}{X_2}{X}$ defined by
            \begin{equation}
                \label{eq:T_isomorphism_general}
                \mc{T}(w,\Lambda)\Phi \coloneqq \parn*{-\frac{\Phi|_S}{g},\Phi - \hat{f}\frac{\Phi|_S}{g}}
            \end{equation}
            is easily seen to be an isomorphism, and yields
            \begin{align*}
                D_w \mc{F}_2(w,\Lambda)\mc{T}(w,\Lambda)\Phi & = -D_\eta \mc{F}_2(w,\Lambda)\parn*{\frac{\Phi|_S}{g}} + D_{\hat{\varphi}} \mc{F}_2(w,\Lambda)\parn*{\Phi-\hat{f}\frac{\Phi|_S}{g}}               \\
                                                             & =D_{\hat{\varphi}} \mc{F}_2(w,\Lambda)\parn*{\hat{f}\frac{\Phi|_S}{g}}+D_{\hat{\varphi}} \mc{F}_2(w,\Lambda)\parn*{\Phi-\hat{f}\frac{\Phi|_S}{g}} \\
                                                             & =D_{\hat{\varphi}} \mc{F}_2(w,\Lambda)\Phi,
            \end{align*}
            whence the second component of \eqref{eq:L_DF_relationship} is satisfied.

            We have shown that the property in \eqref{eq:interesting_equation} is key to establishing the theorem, and this equation turns out to be simpler to consider on the unflattened $\Omega_\eta$ instead. Define therefore the pullbacks $\psi = \hat{\psi} \circ \Pi$ and $f = \hat{f} \circ \Pi$, where we recall that $\Pi$ is the flattening transform from \eqref{def:flattening_transform}. Then the left-hand side of \eqref{eq:interesting_equation} becomes
            \begin{equation}
                \label{eq:interesting_left_hand_side}
                (\Delta + \gamma'(\psi))(fH) = (\Delta + \gamma'(\psi))fH + 2f_xH_x + fH_{xx},
            \end{equation}
            while the right-hand side turns into
            \begin{equation}
                \label{eq:interesting_right_hand_side}
                \parn*{\frac{2\eta_x^2 y \psi_y}{(1+\eta)^3} + \frac{2\psi_{yy}}{1+\eta} - \frac{y\eta_{xx}\psi_y}{(1+\eta)^2} - \frac{2\eta_xy\psi_{xy}}{(1+\eta)^2}}H\\ + 2\parn*{\frac{y\psi_y}{1+\eta}}_xH_x + \frac{y\psi_y}{1+\eta}H_{xx},
            \end{equation}
            after a lengthy computation. By comparing \eqref{eq:interesting_left_hand_side} and \eqref{eq:interesting_right_hand_side}, we see that the only possible solution candidate of \eqref{eq:interesting_equation} corresponds to
            \[
                f = \frac{y\psi_y}{1+\eta},
            \]
            and that we need only verify that the coefficients in front of $H$ in \eqref{eq:interesting_left_hand_side} and \eqref{eq:interesting_right_hand_side} are equal. Note that $f$ has the correct regularity for $\mc{T}$ to be well-defined, because $\psi$ and $\eta$ are analytic by \Cref{thm:analyticity_of_solutions}. Additionally, $\psi_y$ does not vanish on the surface, because of the postulation that $(w,\Lambda) \in \mc{O}$.

            One may now check by direct calculation that
            \[
                (\Delta + \gamma'(\psi))\parn*{\frac{y\psi_y}{1+\eta}} = \begin{multlined}[t]\frac{2\eta_x^2 y \psi_y}{(1+\eta)^3} + \frac{2\psi_{yy}}{1+\eta} - \frac{y\eta_{xx}\psi_y}{(1+\eta)^2} - \frac{2\eta_xy\psi_{xy}}{(1+\eta)^2} \\ +\frac{y}{1+\eta}(\Delta \psi + \gamma(\psi))_y,\end{multlined}
            \]
            where the first terms are precisely the ones in front of $H$ in \eqref{eq:interesting_right_hand_side}, while the last term vanishes because $\psi$ solves \eqref{eq:helmholtz}. Hence \eqref{eq:interesting_equation} holds for
            \[
                \hat{f} = f \circ \Pi^{-1} = \frac{s\hat{\psi}_s}{1+\eta},
            \]
            for which \eqref{eq:T_isomorphism_general} becomes \eqref{eq:T_isomorphism}. Finally, direct computation yields
            \[
                D_w\mc{F}_1(w,\Lambda)\mc{T}(w,\Lambda) = \begin{multlined}[t]\frac{1+\eta_x^2}{(1+\eta)^2}\hat{\psi}_s \Phi_s - \frac{\eta_x\hat{\psi}_s}{1+\eta}\Phi_x\\-\parn*{\frac{1+\eta}{\hat{\psi}_s} + \frac{1+\eta_x^2}{(1+\eta)^2}\hat{\psi}_{ss} + \frac{\eta_x^2\hat{\psi}_s}{(1+\eta)^2} - \frac{\eta_x\hat{\psi}_{xs}}{1+\eta}}\Phi,\end{multlined}
            \]
            where
            \[
                \frac{1+\eta_x^2}{(1+\eta)^2}\hat{\psi}_{ss} + \frac{\eta_x^2}{(1+\eta)^2}\hat{\psi} - \frac{\eta_x}{1+\eta}\hat{\psi}_{xs} = \parn*{\frac{\eta_x\hat{\psi}_s}{1+\eta}}_x -\gamma(\mu)
            \]
            on $S$ because $\mc{F}_2(w,\Lambda)=0$, and so \eqref{eq:L_DF_relationship} holds.
        \end{proof}

        \begin{remark}
            It is worth noting that if $\mc{F}_2(w,\Lambda)=0$, and we define the pullbacks $\psi = \hat{\psi} \circ \Pi$ and $\tilde{\Phi} = \Phi \circ \Pi$, then
            \begin{align*}
                \mc{L}_1(w,\Lambda)\Phi            & = \psi_y \partial^\perp \tilde{\Phi} - \parn*{\partial^\perp\psi_y + \frac{1}{\psi_y}}\tilde{\Phi}, \\
                (\mc{L}_2(w,\Lambda)\Phi)\circ \Pi & = (\Delta +\gamma'(\psi))\tilde{\Phi},
            \end{align*}
            where the functions in the expression for $\mc{L}_1$ are evaluated on $S_\eta$. By $\partial^\perp$, we here mean the non-normalized normal derivative $\partial^\perp \coloneqq \eta^\perp \cdot \nabla$ for $S_\eta$. Viewed through this lens, $\mc{L}(w,\Lambda)$ closely resembles the operator $\mc{L}(\Lambda)$ for the trivial solutions in \eqref{def:L_trivial}.

            We also mention that, while outside the scope of this paper, the $\mc{T}$-isomorphism can be employed even with the pseudo-stream function of waves in a stratified, incompressible fluid.
        \end{remark}

\section{Kernel and local bifurcation}
    \label{section:kernel_local_bifurcation}

    The purpose of this section is to describe the kernel of the operator $\mc{L}(\Lambda)$ from \eqref{def:L_trivial}, and to give the corresponding local bifurcation results for one-dimensional kernels. This extends parts of \cite{Ehrnstroem11Steady,Aasen18Traveling} to more general vorticity distributions, albeit with a slightly different bifurcation parameter. The paper \cite{Kozlov14Dispersion} deals with the same problem, in more detail, but with a quite different approach. Since our primary concern is global bifurcation, we present the results with this goal in mind. Note that \Cref{ass:gamma_regularity} is much stronger, especially the analyticity, than what is actually necessary for most of this section.

    To simplify the description of the kernel of $\mc{L}(\Lambda)$, we define $u = u(s;z)$ to be the solution of the initial value problem
    \begin{equation}
        \label{eq:u_initial_value_problem}
        \begin{gathered}
            u''(s;z) + (\gamma'(\overline{\psi}(s))-z)u(s;z) = 0,\\
            u(0;z) = 0, \qquad u'(0;z) = 1,
        \end{gathered}
    \end{equation}
    where $z$ acts as a parameter, and primes indicate derivatives with respect to $s$. Note that $u$ is entire in the parameter $z$ (see for instance \cite[Chapter 5]{Teschl12Ordinary}), and that $u$ also has a suppressed dependence on $\Lambda$ through $\overline{\psi}$. For real values of $z$ we may introduce the corresponding Prüfer angle $\vartheta=\vartheta(s;z)$ as the unique continuous representative of $\arg(u'+iu)$ with $\vartheta(0;z)=\arg(1)=0$. This representative is well defined since $u$ and $u'$ cannot vanish simultaneously, due to $u$ being the solution of \eqref{eq:u_initial_value_problem}. The Prüfer angle satisfies the first order equation
    \[
        \vartheta'(s;z) = \cos(\vartheta(s;z))^2 +(\gamma'(\overline{\psi}(s))-z)\sin(\vartheta(s;z))^2,
    \]
    at the cost of this equation being nonlinear.

    Recall that the derivative of $\gamma$ is bounded by \Cref{ass:gamma_regularity}. To facilitate the remainder of this section, we introduce the two quantities
    \[
        \rho \coloneqq \inf{\gamma'} \quad \text{and} \quad R \coloneqq \sup{\gamma'},
    \]
    as they are ubiquitous. The next lemma describes the behavior of the Prüfer angle $\vartheta(1;z)$ with respect to the parameter $z$. We will encounter this angle while describing the kernel.

    \begin{lemma}[Properties of $\vartheta$]
        \label{lemma:properties_of_prufer_angle}
        The Prüfer angle $\vartheta(1;\wildcard)$ is strictly decreasing, and in fact $\vartheta_z(1;\wildcard) < 0$. Moreover, it satisfies the bounds
        \begin{equation}
            \label{eq:bounds_for_the_prufer_angle}
            \sigma(z-\rho) \leq \vartheta(1;z) \leq \sigma(z-R),
        \end{equation}
        for all $z \in \R$, where $\map{\sigma}{\R}{(0,\infty)}$ is the (single-valued) function defined by
        \[
            \sigma(z) = \arg\parn*{\cosh(\sqrt{z})+i\frac{\sinh(\sqrt{z})}{\sqrt{z}}},
        \]
        with $\sigma(0) = \pi/4$. In particular, $\vartheta(1,-\infty) = \infty$ and $\vartheta(1,\infty) = 0$ (in the sense of limits).
    \end{lemma}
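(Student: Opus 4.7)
The plan exploits the Prüfer ODE $\vartheta'(s;z)=\cos^2\vartheta+(\gamma'(\overline{\psi}(s))-z)\sin^2\vartheta$, whose right-hand side is smooth (hence Lipschitz) in $\vartheta$ and pointwise non-decreasing in the coefficient. This structure splits the proof naturally into two pieces: a Wronskian computation for the strict monotonicity $\vartheta_z(1;z)<0$, and a scalar-ODE comparison with constant coefficients for the quantitative bounds \eqref{eq:bounds_for_the_prufer_angle}.

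For the monotonicity I would differentiate $u''+(\gamma'(\overline{\psi})-z)u=0$ in $z$ to get $u_z''+(\gamma'(\overline{\psi})-z)u_z=u$, then apply the standard multiply-and-subtract trick on these two equations to obtain $(u_zu'-uu_z')'=-u^2$. Since the initial data $u(0)=0,\,u'(0)=1$ do not depend on $z$, neither do $u_z(0)=u_z'(0)=0$; integrating from $0$ to $s$ yields $u_zu'-uu_z'=-\int_0^s u(t;z)^2\,dt$. Combining this with $\tan\vartheta=u/u'$ and $\cos^2\vartheta=u'^2/(u^2+u'^2)$ gives the compact closed-form expression
\[
\vartheta_z(s;z)=-\frac{\int_0^s u(t;z)^2\,dt}{u(s;z)^2+u'(s;z)^2},
\]
which is strictly negative for every $s>0$ because $u(\cdot;z)$ cannot vanish identically on any right-neighborhood of $0$ (as $u'(0;z)=1$). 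In particular $\vartheta_z(1;z)<0$.

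For the bounds I would freeze $\gamma'(\overline{\psi}(s))$ to the constants $K=\rho$ and $K=R$, and write $\vartheta^{\rho},\vartheta^{R}$ for the Prüfer angles of the corresponding constant-coefficient equations $u''+(K-z)u=0$ with $u(0)=0,\,u'(0)=1$. Because the Prüfer right-hand side is pointwise non-decreasing in its coefficient and globally Lipschitz in $\vartheta$, the standard scalar comparison theorem gives $\vartheta^{\rho}(1;z)\le\vartheta(1;z)\le\vartheta^{R}(1;z)$. For constant $K$ the equation is solved explicitly by $u(s)=\sinh(\sqrt{z-K}\,s)/\sqrt{z-K}$ and $u'(s)=\cosh(\sqrt{z-K}\,s)$ (principal branch, handling both signs of $z-K$ uniformly), so at $s=1$
\[
u'(1)+iu(1)=\cosh\sqrt{z-K}+i\,\sinh\sqrt{z-K}/\sqrt{z-K}.
\]
By definition of $\sigma$ the continuous Prüfer representative at $s=1$ agrees with $\sigma(z-K)$, and the two-sided comparison then becomes precisely $\sigma(z-\rho)\le\vartheta(1;z)\le\sigma(z-R)$.

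The limits follow by squeezing. As $z\to+\infty$, the ratio $\sinh\sqrt{z-K}/(\sqrt{z-K}\cosh\sqrt{z-K})\sim 1/\sqrt{z}$ drives both $\sigma(z-\rho)$ and $\sigma(z-R)$ to $0$, forcing $\vartheta(1;z)\to 0$. As $z\to-\infty$ the point $\cos\sqrt{|z-\rho|}+i\sin\sqrt{|z-\rho|}/\sqrt{|z-\rho|}$ winds around the origin infinitely many times, so the continuous branch $\sigma(z-\rho)$ diverges to $+\infty$, and the lower bound forces $\vartheta(1;z)\to+\infty$. The principal subtlety I anticipate is matching continuous branches between the paper's definition of $\sigma$ and the continuous Prüfer representative for the constant-coefficient problem: this is pinned down by the shared value $\pi/4$ at $z=K$ together with joint continuity in $z$, but it is the one step that must be carried out with care.
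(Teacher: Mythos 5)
Your proposal is correct and follows essentially the same route as the paper: the Wronskian identity $u_zu'-uu_z'=-\int_0^s u^2\,dt$ yielding the closed form for $\vartheta_z(1;z)$, and a Prüfer-angle comparison against the constant-coefficient problems with coefficients frozen at $\rho$ and $R$ (the paper writes out the resulting differential inequalities for $\tilde{\sigma}(s;z-\rho)$ and $\tilde{\sigma}(s;z-R)$ explicitly, which is the same comparison argument you invoke). Your explicit squeeze for the limits and the branch-matching check for $\sigma$ are details the paper leaves implicit, but there is no substantive difference in method.
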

    \begin{proof}
        By differentiating \eqref{eq:u_initial_value_problem} with respect to $z$, multiplying by $u$ and integrating by parts, one arrives at the identity
        \[
            u_z'(1;z)u(1;z)-u_z(1;z)u'(1;z) = \int_0^1 u(s;z)^2\,ds,
        \]
        which implies that
        \begin{align*}
            \vartheta_z(1;z) & = \frac{u_z(1;z)u'(1;z)-u(1;z)u_z'(1;z)}{u(1;z)^2+u'(1;z)^2} \\
                             & = -\int_0^1 \frac{u(s;z)^2\,ds}{u(1;z)^2+u'(1;z)^2} < 0,
        \end{align*}
        proving the first part of the proposition.

        Define now $\tilde{u} = \tilde{u}(s;z)$ by
        \[
            \tilde{u}(s;z) \coloneqq \frac{\sinh(s\sqrt{z})}{\sqrt{z}},
        \]
        and $\tilde{\sigma}=\tilde{\sigma}(s;z)$ by
        \[
            \tilde{\sigma}(s;z) \coloneqq \arg(\tilde{u}'(s;z)+i\tilde{u}(s;z)),
        \]
        choosing the representative in the same way we did for $\vartheta$. To obtain the bounds described in \eqref{eq:bounds_for_the_prufer_angle}, it suffices to observe that $\tilde{\sigma}(0;z)=0$, and that the differential inequalities
        \begin{align*}
            \tilde{\sigma}'(s;z-R)    & \geq \cos(\tilde{\sigma}(s;z-R))^2 + (\gamma'(\overline{\psi}(s))-z)\sin(\tilde{\sigma}(s;z-R))^2       \\
            \intertext{and}
            \tilde{\sigma}'(s;z-\rho) & \leq \cos(\tilde{\sigma}(s;z-\rho))^2 + (\gamma'(\overline{\psi}(s))-z)\sin(\tilde{\sigma}(s;z-\rho))^2
        \end{align*}
        hold by direct computation. Then
        \[
            \tilde{\sigma}(s;z-\rho) \leq \vartheta(s;z) \leq \tilde{\sigma}(s;z-R)
        \]
        for all $s\geq 0$, and in particular we have \eqref{eq:bounds_for_the_prufer_angle} from the special case $s=1$.
    \end{proof}

    Since the function $u$ introduced in \eqref{eq:u_initial_value_problem} is entire in the parameter $z$, we may define a function $l$ by
    \begin{equation}
        \label{eq:kernel_l_definition}
        l(z,\Lambda) \coloneqq \frac{u'(1;z)}{u(1;z)},
    \end{equation}
    which consequently is meromorphic in $z$. Observing that $l(z,\Lambda) = \cot(\vartheta(1;z))$ on the real axis, we immediately obtain the following from \Cref{lemma:properties_of_prufer_angle}:

    \begin{figure}[htb]
        \centering
        \begin{tikzpicture}
	\pgfplotsset{compat=1.12, clip bounding box=default tikz}
	\begin{axis}[xmin=-50,xmax=100,ymin=-40,ymax=40,restrict y to domain=-300:300,xlabel=$z$,xtick={0},ytick={0},width=\textwidth, height=0.5\textwidth, no markers]
		\addplot[thick, black] table[header=false] {graphics/l0.bin};
		\addplot[thick, black] table[header=false] {graphics/l1.bin};
		\addplot[thick, black] table[header=false] {graphics/l2.bin};
		\addplot[thick, black] table[header=false] {graphics/l3.bin};
		
		\addplot[name path=A, thick, black, dashed] table[header=false] {graphics/l0_lb.bin};
		\addplot[name path=B, thick, black, dashed, domain=(55-pi^2):100] {sqrt(x-45)/tanh(sqrt(x-45))};
		\addplot[gray!30] fill between[of=A and B];
		\addplot[black] coordinates {(55-pi^2,-40)(55-pi^2,40)};
		
		\addplot[thick, gray, dashed, domain=(45-pi^2):(55-pi^2)] {sqrt(55-x)/tan(deg(sqrt(55-x)))};
		\addplot[thick, gray, dashed, domain=(45-pi^2):45] {sqrt(45-x)/tan(deg(sqrt(45-x)))};
		
		\addplot[name path=C,thick, black, dashed, domain=(55-4*pi^2):(45-pi^2), samples=100] {sqrt(55-x)/tan(deg(sqrt(55-x)))};
		\addplot[name path=D,thick, black, dashed, domain=(55-4*pi^2):(45-pi^2), samples=100] {sqrt(45-x)/tan(deg(sqrt(45-x)))};
		\addplot[gray!30] fill between[of=C and D];
		\addplot[black] coordinates {(55-4*pi^2,-40)(55-4*pi^2,40)};
		\addplot[black] coordinates {(45-pi^2,-40)(45-pi^2,40)};
		
		\addplot[thick, gray, dashed, domain=(45-4*pi^2):(55-4*pi^2)] {sqrt(55-x)/tan(deg(sqrt(55-x)))};
		\addplot[thick, gray, dashed, domain=(45-4*pi^2):(55-4*pi^2)] {sqrt(45-x)/tan(deg(sqrt(45-x)))};
		
		\addplot[name path=E,thick, black, dashed, domain=(55-9*pi^2):(45-4*pi^2), samples=100] {sqrt(55-x)/tan(deg(sqrt(55-x)))};
		\addplot[name path=F,thick, black, dashed, domain=(55-9*pi^2):(45-4*pi^2), samples=100] {sqrt(45-x)/tan(deg(sqrt(45-x)))};
		\addplot[gray!30] fill between[of=E and F];
		\addplot[black] coordinates {(55-9*pi^2,-40)(55-9*pi^2,40)};
		\addplot[black] coordinates {(45-4*pi^2,-40)(45-4*pi^2,40)};
		
		\addplot[thick, gray, dashed, domain=(45-9*pi^2):(55-9*pi^2)] {sqrt(55-x)/tan(deg(sqrt(55-x)))};
		\addplot[thick, gray, dashed, domain=(45-9*pi^2):(55-9*pi^2)] {sqrt(45-x)/tan(deg(sqrt(45-x)))};
		
		\addplot[name path=G,thick, black, dashed, domain=(55-16*pi^2):(45-9*pi^2), samples=100] {sqrt(55-x)/tan(deg(sqrt(55-x)))};
		\addplot[name path=H,thick, black, dashed, domain=(55-16*pi^2):(45-9*pi^2), samples=100] {sqrt(45-x)/tan(deg(sqrt(45-x)))};
		\addplot[gray!30] fill between[of=G and H];
		\addplot[black] coordinates {(45-9*pi^2,-40)(45-9*pi^2,40)};
	\end{axis}
\end{tikzpicture}
        \caption{The graph of a particular instance of $l(\wildcard,\Lambda)$, with the lower and upper bounds furnished by \Cref{prop:properties_of_l}.}
        \label{fig:kernel_equation}
    \end{figure}
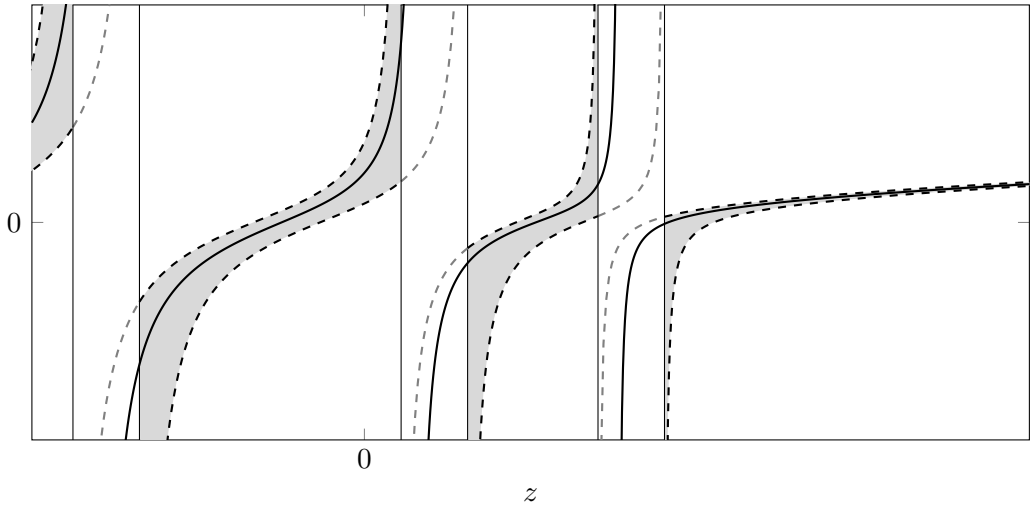

    \begin{proposition}[Properties of $l$]
        \label{prop:properties_of_l}
        The derivative of $l$ is positive on the real axis, except at the poles of $l$, which are all simple. Moreover, $l$ satisfies the bounds
        \begin{equation}
            \label{eq:bounds_for_l}
            v(z-R) \leq l(z,\Lambda) \leq v(z-\rho),
        \end{equation}
        where
        \[
            v(z)\coloneqq\cot(\sigma(z))=\frac{\sqrt{z}}{\tanh(\sqrt{z})},
        \]
        on the (possibly empty) intervals
        \[
            I_j \coloneqq \begin{cases}
                (R-(j+1)^2\pi^2,\rho-j^2\pi^2) & j \geq 1 \\
                (R-\pi^2,\infty)               & j=0.
            \end{cases}
        \]
        Finally, the point $z=0$ is a pole of $l$ if and only if $\overline{\psi}_\lambda(0) = 0$, and if not then
        \begin{equation}
            \label{eq:value_of_l_at_zero}
            l(0,\Lambda) = - \frac{\overline{\psi}_\mu(0)}{\overline{\psi}_\lambda(0)}.
        \end{equation}
    \end{proposition}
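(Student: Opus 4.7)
The plan is to derive everything from the relation $l(z,\Lambda)=\cot(\vartheta(1;z))$, which holds on the real axis by construction, and from the previous lemma.

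For the first assertion, I would differentiate that identity to get $l_z=-\csc^2(\vartheta(1;z))\,\vartheta_z(1;z)$, which is strictly positive away from the poles by \Cref{lemma:properties_of_prufer_angle}. The poles of $l$ occur exactly at zeros of $u(1;\cdot)$. Reusing the identity derived in that lemma's proof,
\[
u_z'(1;z)u(1;z)-u_z(1;z)u'(1;z)=\int_0^1 u(s;z)^2\,ds,
\]
and noting that $u'(1;z)\neq 0$ whenever $u(1;z)=0$ (because $u$ is a non-trivial ODE solution), I can conclude that $u_z(1;z_0)\neq 0$ at each pole $z_0$, so the zero of $u(1;\cdot)$ is simple and hence so is the pole of $l$.

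For the sandwich bounds, the strategy is to verify that on the interval $I_j$ both $\sigma(z-\rho)$ and $\sigma(z-R)$ lie in the open strip $(j\pi,(j+1)\pi)$, forcing $\vartheta(1;z)$ into that same strip by \eqref{eq:bounds_for_the_prufer_angle}. Concretely, using $\sqrt{z}=i\sqrt{-z}$ for $z<0$, one sees that $\sigma(-t^2)$ equals $k\pi$ precisely at $t=k\pi$, and is monotone on each sub-interval $((k-1)\pi,k\pi)$; the endpoints of $I_j$ were chosen exactly so that $z-R$ and $z-\rho$ never leave the stripe $(-(j+1)^2\pi^2,-j^2\pi^2)$, which is the $\sigma$-preimage of $(j\pi,(j+1)\pi)$. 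Since $\cot$ is strictly decreasing on $(j\pi,(j+1)\pi)$, applying it to $\sigma(z-\rho)\leq\vartheta(1;z)\leq\sigma(z-R)$ reverses the inequalities and yields \eqref{eq:bounds_for_l}.

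The final claim is essentially a Wronskian identity. Differentiating the defining IVP \eqref{eq:trivial_solution_definition} in $\mu$ and in $\lambda$ shows that both $\overline{\psi}_\mu$ and $\overline{\psi}_\lambda$ satisfy the same equation $\phi''+\gamma'(\overline{\psi})\phi=0$ as $u(\cdot;0)$, with the prescribed terminal data at $s=1$. Because this equation has no first-order term, their Wronskian is constant and evaluates to $-1$ at $s=1$; consequently $\{\overline{\psi}_\mu,\overline{\psi}_\lambda\}$ forms a fundamental system. Matching the initial data at $s=0$ via Cramer's rule gives
\[
u(s;0)=\overline{\psi}_\mu(0)\,\overline{\psi}_\lambda(s)-\overline{\psi}_\lambda(0)\,\overline{\psi}_\mu(s).
\]
Evaluating at $s=1$ yields $u(1;0)=-\overline{\psi}_\lambda(0)$ and $u'(1;0)=\overline{\psi}_\mu(0)$, from which the pole characterization and \eqref{eq:value_of_l_at_zero} follow immediately.

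The only part I expect to require mild care is the bookkeeping of which multiple of $\pi$ the function $\sigma$ crosses at each endpoint of $I_j$; everything else is a direct consequence of \Cref{lemma:properties_of_prufer_angle} and constant-coefficient Wronskian manipulations.
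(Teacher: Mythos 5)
Your proposal is correct and follows the same route as the paper: the paper obtains everything except the $z=0$ claim as an immediate consequence of $l=\cot(\vartheta(1;\cdot))$ and \Cref{lemma:properties_of_prufer_angle}, and proves \eqref{eq:value_of_l_at_zero} via exactly your fundamental-system identity $u(s;0)=\overline{\psi}_\mu(0)\overline{\psi}_\lambda(s)-\overline{\psi}_\lambda(0)\overline{\psi}_\mu(s)$ together with constancy of the Wronskian. You merely spell out the details the paper leaves implicit (the branch-tracking for $\cot$ on $I_j$ and the simplicity of the poles via the integral identity), all of which check out; the only blemish is the stated Wronskian value $-1$, which should be $+1$ in the order $(\overline{\psi}_\mu,\overline{\psi}_\lambda)$ --- as your own final formula and the normalization $u'(0;0)=1$ in fact require --- a harmless convention slip.
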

    \begin{proof}
        In order to show that \eqref{eq:value_of_l_at_zero} holds, it suffices to observe that
        \[
            u(s;0) = \overline{\psi}_\mu(0)\overline{\psi}_\lambda(s) - \overline{\psi}_\lambda(0)\overline{\psi}_\mu(s)
        \]
        for all $s \in [0,1]$. Indeed, the right-hand side satisfies \eqref{eq:u_initial_value_problem} (with $z=0$) by differentiation of \eqref{eq:trivial_solution_definition}, and vanishes at $s=0$. It also has the correct derivative at $s=0$ since
        \[
            \overline{\psi}_\mu(0)\overline{\psi}_\lambda'(0) - \overline{\psi}_\lambda(0)\overline{\psi}_\mu'(0) = \overline{\psi}_\mu(1)\overline{\psi}_\lambda'(1) - \overline{\psi}_\lambda(1)\overline{\psi}_\mu'(1) = 1,
        \]
        where we have used that the Wronskian of $\overline{\psi}_\mu$ and $\overline{\psi}_\lambda$ is constant.
    \end{proof}

    We are now equipped with everything we need to describe the kernel of the operator $\mc{L}(\Lambda)$.

    \begin{theorem}[Kernel of $\mc{L}(\Lambda)$]
        \label{thm:kernel}
        Let $\Lambda \in \mc{U}$. A basis for $\ker{\mc{L}(\Lambda)}$ is then given by $\{\Phi_n\}_{n \in M}$, where
        \[
            \Phi_n(x,s) \coloneqq \cos(n\kappa x)u(s;n^2\kappa^2)
        \]
        and $M = M(\Lambda)$ is the finite set of all $n \in \N_0$ satisfying the kernel equation
        \begin{equation}
            \label{eq:kernel_equation}
            l(n^2\kappa^2,\Lambda) = r(\Lambda),
        \end{equation}
        where
        \begin{equation}
            \label{eq:kernel_r_definition}
            r(\Lambda) \coloneqq \frac{1}{\lambda^2} - \frac{\gamma(\mu)}{\lambda},
        \end{equation}
        and $l$ is the function defined in \eqref{eq:kernel_l_definition}.
    \end{theorem}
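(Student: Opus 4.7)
The plan is to exploit the fact that the coefficients of $\mc{L}(\Lambda)$ are independent of $x$, and decompose $\Phi$ using a horizontal Fourier cosine series. First, I would write any $\Phi \in X_2$ as $\Phi(x,s) = \sum_{n \geq 0} \phi_n(s)\cos(n\kappa x)$, with the coefficients $\phi_n$ defined as integrals of $\Phi$ against $\cos(n\kappa x)$ over one horizontal period. Each $\phi_n$ is therefore automatically $C^{2,\beta}$ in $s$ and satisfies $\phi_n(0)=0$. Since $\mc{L}_2(\Lambda) = \Delta + \gamma'(\overline{\psi}(s))$ has $x$-independent coefficients, integrating $\mc{L}_2(\Lambda)\Phi = 0$ against $\cos(n\kappa x)$ produces, mode-by-mode,
\[
\phi_n''(s) + \bigl(\gamma'(\overline{\psi}(s)) - n^2\kappa^2\bigr)\phi_n(s) = 0,
\]
and the analogous argument applied to $\mc{L}_1(\Lambda)\Phi = 0$ on $S$ gives the surface condition $\lambda\phi_n'(1) + (\gamma(\mu) - 1/\lambda)\phi_n(1) = 0$. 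Because these identities come from direct integration against $\cos(n\kappa x)$, no global convergence argument for the Fourier series of $\Phi$ itself is needed at this stage.

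Next, I would invoke uniqueness for the initial value problem \eqref{eq:u_initial_value_problem} defining $u(\cdot;z)$ to conclude that $\phi_n(s) = \phi_n'(0)\, u(s;n^2\kappa^2)$. If $\phi_n'(0)=0$ then $\phi_n \equiv 0$; otherwise the surface condition reduces to
\[
\lambda u'(1;n^2\kappa^2) + \bigl(\gamma(\mu) - \tfrac{1}{\lambda}\bigr)u(1;n^2\kappa^2) = 0.
\]
I would observe that $u(1;n^2\kappa^2)\neq 0$, since otherwise the above equation also forces $u'(1;n^2\kappa^2)=0$, contradicting the impossibility of $u$ and $u'$ vanishing simultaneously at the same point. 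Dividing by $\lambda u(1;n^2\kappa^2)$ then rearranges to the kernel equation \eqref{eq:kernel_equation}, i.e.\ $n \in M$. Conversely, for any $n \in M$ the function $\Phi_n$ belongs to $X_2$ — evenness and $2\pi/\kappa$-periodicity are inherited from $\cos(n\kappa x)$, $u(0;n^2\kappa^2)=0$ supplies the Dirichlet condition on $B$, and $u(\cdot;n^2\kappa^2)$ is analytic on $[0,1]$ — and a direct verification shows $\mc{L}(\Lambda)\Phi_n = 0$.

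Finally, to establish finiteness of $M$ and assemble the basis, I would invoke \Cref{prop:properties_of_l}: the bound $l(z,\Lambda) \geq v(z-R)$ on $I_0 = (R-\pi^2,\infty)$, combined with $v(z-R)\to\infty$ as $z\to\infty$, ensures $l(n^2\kappa^2,\Lambda) > r(\Lambda)$ for all sufficiently large $n$, so that $M \subset \N_0$ is finite. Since the mode-by-mode analysis forces $\phi_n \equiv 0$ for every $n \notin M$, the Fourier expansion of $\Phi$ collapses to the finite sum $\Phi = \sum_{n\in M}\phi_n'(0)\Phi_n$. This identifies $\{\Phi_n\}_{n\in M}$ as a spanning set, and linear independence is immediate from the orthogonality of distinct cosine harmonics. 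The main delicate point is the mode separation itself — ensuring that every element of $\ker \mc{L}(\Lambda)$ is genuinely captured by testing against each $\cos(n\kappa x)$ — but this only requires the standard fact that the Fourier coefficients of a $C^{2,\beta}$ function are $C^{2,\beta}$ in the remaining variable and inherit any differential identity whose coefficients are $x$-independent; no deeper convergence machinery is required.
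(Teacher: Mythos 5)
Your proposal is correct and follows essentially the same route as the paper: a horizontal Fourier cosine decomposition reducing $\ker\mc{L}(\Lambda)$ to a mode-by-mode Sturm--Liouville problem whose nontrivial solvability is exactly the kernel equation \eqref{eq:kernel_equation}, with finiteness of $M$ coming from the properties of $l$ in \Cref{prop:properties_of_l}. The only differences are cosmetic — you spell out the steps (why $u(1;n^2\kappa^2)\neq 0$, the convergence of the mode decomposition) that the paper dispatches as ``well known,'' and you derive finiteness from the lower bound $v(z-R)\to\infty$ rather than from the strict monotonicity of $l$ on $I_0$, which amounts to the same thing.
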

    \begin{proof}
        Suppose that $\Phi \in X_2$, and write it as a Fourier series
        \[
            \Phi(x,s) = \sum_{n=0}^\infty a_n(s)\cos(n\kappa x)
        \]
        in the horizontal direction. By inserting the series into \eqref{def:L_trivial}, we deduce that $\Phi \in \ker{\mc{L}(\Lambda)}$ if and only if each coefficient $a_n$ solves the regular Sturm--Liouville problem
        \begin{gather*}
            a_n''(s) + (\gamma'(\overline{\psi}(s))-n^2 \kappa^2)a_n(s)=0\\
            a_n(0)=0, \qquad \lambda a_n'(1) + \parn*{\gamma(\mu)-\frac{1}{\lambda}}a_n(1)=0
        \end{gather*}
        for all $n \in \N_0$. Trivially, $a_n = 0$ is always a solution, but not always the only one: It is well known that this Sturm--Liouville problem has nonzero solutions, spanned by $u(\wildcard;n^2\kappa^2)$, if and only if \eqref{eq:kernel_equation} is satisfied. There are only finitely many solutions of \eqref{eq:kernel_equation}, as $n^2\kappa^2 \in I_0$ for all sufficiently large $n \in \N_0$, and $l$ is strictly increasing there.
    \end{proof}
    \begin{remark}
        The function $l$ will depend non-trivially on $\Lambda$ unless $\gamma'$ is a constant, namely when the vorticity is either constant or affine. We also mention that we would typically like to avoid the degenerate case where $n = 0$ solves \eqref{eq:kernel_equation}. For this reason, \eqref{eq:value_of_l_at_zero} can occasionally be useful.
    \end{remark}

    Exactly one-dimensional kernels can be found under certain assumptions on $\gamma$, especially if we are willing to relinquish control of the wavenumber $\kappa > 0$. One way is to, in essence, require that $\gamma$ is close enough to affine to enable us to exploit the bounds in \eqref{eq:bounds_for_l}. Note that there is no loss of generality in limiting the scope to $M(\Lambda)=\{1\}$, by redefining $\kappa$, as long as the only interest is in one-dimensional kernels $M(\Lambda) \neq \{0\}$.

    \begin{proposition}[Kernel construction]\leavevmode
        \label{thm:kernel_construction}
        \begin{enumerate}[(i)]
            \item Suppose that $n^2\kappa^2 \in I_j$ for some $j \in \N_0$, and further that $\mu$ is such that
                \[
                    \gamma(\mu)^2 >- 4v(n^2\kappa^2-R),
                \]
                with $v$ as defined in \Cref{prop:properties_of_l}. Then there exist $\lambda \neq 0$ such that $n\in M(\Lambda)$. More precisely, such $\lambda$ can always be chosen to satisfy
                \begin{equation}
                    \label{eq:lambda_intervals_nonpositive}
                    \lambda \in \begin{cases}
                        (0,-2/\gamma(\mu)) & \gamma(\mu) < 0, \\
                        (-2/\gamma(\mu),0) & \gamma(\mu) > 0,
                    \end{cases}
                \end{equation}
                if $v(n^2\kappa^2 - R) \leq 0$, and
                \begin{equation}
                    \label{eq:lambda_intervals_positive}
                    \lambda \in \begin{cases}
                        \text{$(-\infty,0)$ or $(0,-1/\gamma(\mu))$} & \gamma(\mu) < 0 \\
                        \text{$(-\infty,0)$ or $(0,\infty)$}         & \gamma(\mu) = 0 \\
                        \text{$(-1/\gamma(\mu),0)$ or $(0,\infty)$}  & \gamma(\mu) > 0
                    \end{cases}
                \end{equation}
                otherwise.
            \item Assume that $0 \in I_j$ for some $j \in \N_0$, and let $\mu \in \R$. For any $\kappa > 0$ such that $\kappa^2 \in I_0$ and
                \[
                    v(\kappa^2-R) > \max(v(-\rho),-\gamma(\mu)^2/4)
                \]
                there is a $\lambda \neq 0$, which can be chosen according to  \eqref{eq:lambda_intervals_nonpositive} or \eqref{eq:lambda_intervals_positive}, such that $M(\Lambda) \equiv \{1\}$. In particular, this is the case for all sufficiently large $\kappa > 0$.
        \end{enumerate}
    \end{proposition}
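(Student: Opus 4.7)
My plan is to fix $\mu$ and recast the kernel equation $l(n^2\kappa^2,\Lambda) = r(\Lambda)$ as a zero of the continuous function $F(\lambda) \coloneqq l(n^2\kappa^2,(\mu,\lambda)) - r(\mu,\lambda)$ on $\R \setminus \{0\}$, then apply the intermediate value theorem on a well-chosen interval. Continuity of $F$ in $\lambda$ follows from the analytic dependence $\Lambda \mapsto \overline{\psi}$ noted below \eqref{eq:mcF_definition}, which through \eqref{eq:u_initial_value_problem} propagates to $l(z,\cdot)$ for each fixed $z$. The key quantitative input is the uniform sandwich $v(n^2\kappa^2-R) \leq l(n^2\kappa^2,\Lambda) \leq v(n^2\kappa^2-\rho)$ from \Cref{prop:properties_of_l}, combined with elementary calculus of $r(\lambda) = 1/\lambda^2 - \gamma(\mu)/\lambda$: namely that $r \to +\infty$ as $\lambda \to 0$, $r \to 0$ as $|\lambda| \to \infty$, and that when $\gamma(\mu) \ne 0$ the function $r$ attains a unique global minimum $-\gamma(\mu)^2/4$ at the critical point $\lambda = 2/\gamma(\mu)$.

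For part (i) I would handle the two cases in turn. In the nonpositive case, the hypothesis $\gamma(\mu)^2 > -4v(n^2\kappa^2-R)$ forces both $\gamma(\mu) \ne 0$ and $-\gamma(\mu)^2/4 < v(n^2\kappa^2-R)$; evaluating at the critical point then yields $r = -\gamma(\mu)^2/4 < v(n^2\kappa^2-R) \leq l$, so $F(2/\gamma(\mu)) > 0$. Since $F \to -\infty$ as $\lambda \to 0$ on whichever side of $0$ the critical point lies, IVT supplies a zero of $F$ strictly between $0$ and the critical point, which matches the location claim in \eqref{eq:lambda_intervals_nonpositive}. In the remaining case $v(n^2\kappa^2-R) > 0$ the hypothesis is automatic, but now the lower bound on $l$ is itself positive; I would instead exploit $r \to 0$ at infinity: for $\lambda$ of appropriate sign one has $F(\lambda) \geq v(n^2\kappa^2-R) - r(\lambda) > 0$ for $|\lambda|$ large, while $F \to -\infty$ near $\lambda = 0$ as before. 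IVT again produces a zero, and an elementary sign analysis of $r$ (together with its monotonicity on each half-line) pins down the endpoints $\pm 1/\gamma(\mu)$ carved out in \eqref{eq:lambda_intervals_positive}.

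For part (ii) I would first apply part (i) with $n=1$ and $j=0$---the hypothesis $v(\kappa^2-R) > -\gamma(\mu)^2/4$ is exactly what part (i) needs---to produce $\lambda$ with $1 \in M(\Lambda)$. Ruling out every other $n \in \N_0$ reduces to sign comparisons via the sandwich bounds. For $n = 0$, the assumption $0 \in I_j$ yields $l(0,\Lambda) \leq v(-\rho)$, while $l(\kappa^2,\Lambda) \geq v(\kappa^2-R)$, so the hypothesis $v(\kappa^2-R) > v(-\rho)$ gives $l(0,\Lambda) < r(\Lambda) = l(\kappa^2,\Lambda)$. For $n \geq 2$, since $I_0$ is semi-infinite and contains $\kappa^2$, also $n^2\kappa^2 \in I_0$, so $l(n^2\kappa^2,\Lambda) \geq v(n^2\kappa^2-R)$; monotonicity of $v$ on $(-\pi^2,\infty)$ reduces the desired strict inequality to $(n^2-1)\kappa^2 > R - \rho$, which for $n \geq 2$ is implied by $\kappa^2 > R - \rho$, itself equivalent (on the monotonic branch) to $v(\kappa^2-R) > v(-\rho)$. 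The concluding statement about all sufficiently large $\kappa$ is immediate, since $v(\kappa^2-R) \to +\infty$ as $\kappa \to \infty$, eventually dominating both $v(-\rho)$ and $-\gamma(\mu)^2/4$.

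The main obstacle I anticipate is the joint $\lambda$-dependence of both $l$ and $r$: neither alone pins down zeros, so one is faced with a transcendental equation. The sandwich bounds from \Cref{prop:properties_of_l} cleanly decouple this, reducing part (i) to a one-sided estimate on $F$ at a carefully chosen endpoint, and part (ii) to pigeonholing $r = l(\kappa^2,\Lambda)$ between the upper bound on $l(0,\cdot)$ and the lower bound on $l(n^2\kappa^2,\cdot)$ for $n \ge 2$.
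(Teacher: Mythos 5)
Your proposal follows essentially the same route as the paper's proof: the $\Lambda$-independent sandwich $v(n^2\kappa^2-R)\le l(n^2\kappa^2,\Lambda)\le v(n^2\kappa^2-\rho)$ from \Cref{prop:properties_of_l}, the elementary behaviour of $r(\mu,\cdot)$ (blow-up at $\lambda=0$, decay at infinity, global minimum $-\gamma(\mu)^2/4$), and the intermediate value theorem applied to $l-r$. Part (ii) is handled exactly as in the paper for $n=0$; for $n\ge 2$ your detour through $v$ is a slightly clumsier substitute for the paper's one-line observation that $l$ is strictly increasing on $I_0$, which contains both $\kappa^2$ and $n^2\kappa^2$. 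The existence claims are therefore established correctly.

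Two points deserve attention. First, you correctly locate the minimizer of $r(\mu,\cdot)$ at $\lambda=2/\gamma(\mu)$ and its zero at $\lambda=1/\gamma(\mu)$ (check: for $\gamma(\mu)=2$ one has $r(1)=-1=-\gamma(\mu)^2/4$ and $r(1/2)=0$, while $r(-1)=3$). Your IVT argument consequently places the root of $l-r$ between $0$ and $2/\gamma(\mu)$, i.e.\ in $(0,2/\gamma(\mu))$ for $\gamma(\mu)>0$ and in $(2/\gamma(\mu),0)$ for $\gamma(\mu)<0$. This is the mirror image of what \eqref{eq:lambda_intervals_nonpositive} asserts, so your sentence ``which matches the location claim'' is not justified as written; you have in fact uncovered a sign slip in the stated intervals (and in the paper's own proof, which asserts that the infimum of $r$ is attained at $\lambda=-2/\gamma(\mu)$ and that $r(\mu,-1/\gamma(\mu))=0$). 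You should flag this discrepancy explicitly rather than assert agreement. Second, in part (ii) your inference $v(\kappa^2-R)>v(-\rho)\Rightarrow\kappa^2>R-\rho$ relies on monotonicity of $v$ along a single branch, which is not available when $0\in I_j$ with $j\ge 1$ (then $-\rho<-\pi^2$ lies on a different branch of $v$); the inequality you need for $n\ge 2$ still holds in that case by a direct estimate, but the paper's argument via strict monotonicity of $l$ on $I_0$ sidesteps the issue entirely.
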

    \begin{proof}
        We know by \Cref{prop:properties_of_l} that if $n^2\kappa^2 \in I_j$, then
        \[
            v(n^2\kappa^2-R) \leq l(n^2\kappa^2,\Lambda) \leq v(n^2\kappa^2-\rho),
        \]
        where it is crucial that the bounds do not depend on $\Lambda$. Observe that
        \[
            \inf_{\lambda \neq 0}{r(\mu,\lambda)} = - \frac{1}{4}\gamma(\mu)^2
        \]
        for every $\mu \in \R$, with the infimum attained at $\lambda = -2/\gamma(\mu)$ as long as $\gamma(\mu) \neq 0$. In the same event, we also have $r(\mu,-1/\gamma(\mu))=0$. Moreover, $r(\mu,\lambda) \to \infty$ as $\lambda \to 0$, and $r(\mu,\lambda) \to 0$ as $\abs{\lambda}\to \infty$. The first part of the proposition now follows from the intermediate value theorem applied to $r(\mu,\wildcard)-l(n^2\kappa^2,\mu,\wildcard)$, on appropriate intervals chosen according to either \eqref{eq:lambda_intervals_nonpositive} or \eqref{eq:lambda_intervals_positive}.

        For the second part of the proposition, observe that the hypothesis of the first part is satisfied with $n=1$ and $j=0$. Thus, there is some $\lambda \neq 0$ such that $1 \in M(\Lambda)$. Moreover, by the assumptions and \Cref{prop:properties_of_l} we have
        \[
            l(0,\Lambda) \leq v(-\rho) < v(\kappa^2 - R) \leq l(\kappa^2,\Lambda),
        \]
        whence $0 \notin M(\Lambda)$. Finally, $l(n^2\kappa^2,\Lambda) > l(\kappa^2,\Lambda)$ for all $n \geq 2$, since $l$ is strictly increasing on $I_0$. Thus $M(\Lambda) = \{1\}$.
    \end{proof}
    \begin{remark}
        Kernels of arbitrarily large finite dimension exist when $\gamma$ is affine \cite{Aasen18Traveling}, but it is unclear if, and in what sense, the existence of multi-dimensional kernels generalizes to more general vorticity distributions. We will not pursue this question here.
    \end{remark}

    \subsection{The Fredholm property of \texorpdfstring{$\mc{L}(\Lambda)$}{L}}

        Let us introduce the notation
        \[
            \mc{T}(\Lambda) = (\mc{T}_1,\mc{T}_2)(\Lambda) \coloneqq \mc{T}(0,\Lambda), \qquad \Lambda \in \mc{U},
        \]
        for the $\mc{T}$-isomorphism at the trivial solutions, mirroring our use of $\mc{L}(\Lambda)$ in \eqref{def:L_trivial}. If we also equip $Y$ with the inner product (inducing a finer topology)
        \begin{equation}
            \label{eq:Y_inner_product}
            \angl{w_1,w_2}_Y \coloneqq \angl{\eta_1,\eta_2}_{L^2_\kappa(\R)} + \angl{\hat{\varphi}_1,\hat{\varphi}_2}_{L^2_\kappa(\Omega)}, \qquad w_i = (\eta_i,\hat{\varphi}_i),
        \end{equation}
        we can state a useful lemma. In particular, we will employ it to describe the image of $\mc{L}(\Lambda)$.

        \begin{lemma}[Symmetry for $\mc{L}(\Lambda)$]
            \label{lemma:L_symmetry}
            The identity
            \[
                \angl*{\parn*{\mc{T}_1(\Lambda)\Phi,\Phi},\mc{L}(\Lambda)\Psi}_Y = \angl*{\mc{L}(\Lambda)\Phi,\parn*{\mc{T}_1(\Lambda)\Psi,\Psi}}_Y
            \]
            holds for all $\Phi, \Psi \in X_2$.
        \end{lemma}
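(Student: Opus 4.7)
The plan is to unfold both inner products in \eqref{eq:Y_inner_product}, use Green's identity for $\mc{L}_2(\Lambda)=\Delta+\gamma'(\overline{\psi})$ on $\Omega$, and match the resulting boundary term against the explicit surface contribution coming from $\mc{L}_1(\Lambda)$ and $\mc{T}_1(\Lambda)$. Concretely, at the trivial solution we have $\hat{\psi}_s|_S=\lambda$ and $\eta=0$, so $\mc{T}_1(\Lambda)\Phi=-\Phi|_S/\lambda$, while $\mc{L}_1(\Lambda)\Phi=\lambda\Phi_s|_S+(\gamma(\mu)-1/\lambda)\Phi|_S$ and $\mc{L}_2(\Lambda)\Phi=(\Delta+\gamma'(\overline{\psi}))\Phi$.

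First I would rearrange the claimed identity into
\[
\angl{\Phi,\mc{L}_2(\Lambda)\Psi}_{L^2_\kappa(\Omega)}-\angl{\mc{L}_2(\Lambda)\Phi,\Psi}_{L^2_\kappa(\Omega)}=\angl{\mc{L}_1(\Lambda)\Phi,\mc{T}_1(\Lambda)\Psi}_{L^2_\kappa(\R)}-\angl{\mc{T}_1(\Lambda)\Phi,\mc{L}_1(\Lambda)\Psi}_{L^2_\kappa(\R)}.
\]
The left-hand side is evaluated via Green's second identity: since $\gamma'(\overline{\psi})$ is a multiplication operator (hence self-adjoint) it drops out, leaving only
\[
\int_0^{2\pi/\kappa}\parn{\Phi\,\partial_n\Psi-\Psi\,\partial_n\Phi}\,d\sigma
\]
over one period of $\partial\Omega$. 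On the flat bed $B$ we have $\Phi|_B=\Psi|_B=0$ because $\Phi,\Psi\in X_2$, so that portion vanishes, and only the surface $S$ (with outward normal $\partial_s$) contributes
\[
\int_0^{2\pi/\kappa}\parn{\Phi|_S\,\Psi_s|_S-\Psi|_S\,\Phi_s|_S}\,dx.
\]

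For the right-hand side I would just plug in the formulas. Writing $\phi\coloneqq\Phi|_S$, $\phi'\coloneqq\Phi_s|_S$ and similarly for $\Psi$, a direct computation gives
\[
\mc{L}_1(\Lambda)\Phi\cdot\mc{T}_1(\Lambda)\Psi=-\phi'\psi+\frac{1/\lambda-\gamma(\mu)}{\lambda}\phi\psi,
\]
and swapping $\Phi\leftrightarrow\Psi$ yields the analogous expression with $-\phi\psi'$ in place of $-\phi'\psi$. The symmetric term $(1/\lambda-\gamma(\mu))\phi\psi/\lambda$ cancels upon subtraction, leaving exactly $\int_0^{2\pi/\kappa}(\phi\psi'-\phi'\psi)\,dx$, which matches the boundary term from the Green's identity above.

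There is no real obstacle here beyond careful bookkeeping; the whole point of introducing $\mc{T}(\Lambda)$ and the particular form of $\mc{L}_1(\Lambda)$ is that the surface boundary term produced by integrating $\Delta$ by parts is precisely absorbed, up to the symmetric $\phi\psi$ cross term, by the pairing of $\mc{L}_1$ against $\mc{T}_1$. The regularity needed for the integration by parts is already available from $\Phi,\Psi\in X_2\subset C^{2,\beta}_{\kappa,\mathrm{e}}(\overline{\Omega})$, and the periodicity in $x$ eliminates any lateral boundary contributions.
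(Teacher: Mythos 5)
Your proof is correct and follows essentially the same route as the paper's: Green's second identity on $\Omega$ (with the bed and lateral contributions vanishing by the Dirichlet condition in $X_2$ and periodicity), followed by the explicit surface computation with $\mc{T}_1(\Lambda)\Phi=-\Phi|_S/\lambda$ and $\mc{L}_1(\Lambda)$, in which the symmetric $\Phi\Psi$ cross term cancels. The only cosmetic difference is that you rearrange the identity into commutator form before matching boundary terms, whereas the paper substitutes directly; the content is identical.
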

        \begin{proof}
            By one of Green's identities, we have
            \[
                \int_\Omega\parn*{\Phi \Delta \Psi - \Psi \Delta \Phi}\,dx\,ds = \int_S\parn*{\Phi \Psi_s - \Phi_s \Psi}\,dx,
            \]
            where the integrals are understood to be over one period. Therefore
            \[
                \angl{\Phi,\mc{L}_2(\Lambda)\Psi}_{L_\kappa^2(\Omega)} = \angl{\mc{L}_2(\Lambda)\Phi,\Psi}_{L_\kappa^2(\Omega)} + \int_S\parn*{\Phi \Psi_s - \Phi_s \Psi}\,dx,
            \]
            and as a consequence, we find
            \begin{align*}
                \angl*{\parn*{\mc{T}_1(\Lambda)\Phi,\Phi},\mc{L}(\Lambda)\Psi}_Y & = \begin{multlined}[t]\angl{\mc{L}_2(\Lambda)\Phi,\Psi}_{L_\kappa^2(\Omega)} + \int_S\parn*{\Phi \Psi_s - \Phi_s \Psi}\,dx\\
                    + \int_S \Phi (r(\Lambda)\Psi - \Psi_s)\,dx
                \end{multlined}                                                                                                  \\
                                                                                 & =\angl{\mc{L}_2(\Lambda)\Phi,\Psi}_{L_\kappa^2(\Omega)} + \angl{\mc{L}_1(\Lambda)\Phi,\mc{T}_1(\Lambda)\Psi}_{L_\kappa^2(\R)} \\
                                                                                 & =\angl*{\mc{L}(\Lambda)\Phi,\parn*{\mc{T}_1(\Lambda)\Psi,\Psi}}_Y
            \end{align*}
            by direct computation.
        \end{proof}

        Since $\mc{L}(\Lambda)$ is a simple elliptic operator (with boundary conditions), it is a standard result that it, and by consequence $D_w\mc{F}(0,\Lambda)$ through \Cref{thm:T_isomorphism}, is Fredholm of index zero. Stated more precisely, we have the following:

        \begin{lemma}[Fredholm property of $\mc{L}(\Lambda)$]
            \label{lemma:fredholm_property_trivial}
            Suppose that $\Lambda \in \mc{U}$. Then $\mc{L}(\Lambda)$ is a Fredholm operator of index zero. Moreover, its image is the orthogonal complement of the subspace
            \[
                Z(\Lambda)\coloneqq\brac*{(\mc{T}_1(\Lambda)\Phi,\Phi): \Phi \in \ker{\mc{L}}(\Lambda)}
            \]
            in $Y$ with respect to the inner product in \eqref{eq:Y_inner_product}.
        \end{lemma}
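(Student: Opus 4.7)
The plan is to establish the Fredholm property of $\mc{L}(\Lambda)\colon X_2 \to Y$ by standard elliptic theory, and then identify its range by combining \Cref{lemma:L_symmetry} with a dimension count.

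For the Fredholm property, I would recognize that $\mc{L}(\Lambda)$ encodes a second-order elliptic boundary value problem on the periodic cylinder: the interior operator $\mc{L}_2(\Lambda) = \Delta + \gamma'(\overline{\psi})$ is uniformly elliptic on $\Omega$, the Dirichlet condition $\Phi|_B = 0$ is built into $X_2$, and on $S$ the boundary operator $\mc{L}_1(\Lambda)\Phi = \lambda\Phi_s + (\gamma(\mu) - 1/\lambda)\Phi$ has principal part $\lambda \partial_s$ with $\lambda \neq 0$ throughout $\mc{U}$, so it is a genuine Robin (non-tangential) condition satisfying the Lopatinskii complementing condition. The Schauder/ADN framework on the compact $x$-periodic cylinder then yields that $\mc{L}(\Lambda)$ is Fredholm. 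For the index-zero claim, since the coefficients depend only on $s$, one may Fourier-decompose $\Phi$ in $x$ (as in the proof of \Cref{thm:kernel}) and reduce to a family of self-adjoint regular Sturm-Liouville problems on $[0,1]$ with real boundary coefficients: each is Fredholm of index zero, with trivial kernel and cokernel except for the finitely many modes constituting $M(\Lambda)$. Summing gives total index zero.

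For the range identification, \Cref{lemma:L_symmetry} yields the inclusion $\img \mc{L}(\Lambda) \subseteq Z(\Lambda)^\perp$ essentially for free: if $w = \mc{L}(\Lambda)\Psi$ and $\Phi \in \ker \mc{L}(\Lambda)$, then
\[
    \angl{w, (\mc{T}_1(\Lambda)\Phi,\Phi)}_Y = \angl{(\mc{T}_1(\Lambda)\Psi,\Psi), \mc{L}(\Lambda)\Phi}_Y = 0.
\]
For the reverse inclusion I would run a dimension count. The map $\Phi \mapsto (\mc{T}_1(\Lambda)\Phi,\Phi)$ is linear and injective, since its second slot recovers $\Phi$, and so $\dim Z(\Lambda) = \dim \ker \mc{L}(\Lambda)$. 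By \Cref{thm:kernel} this dimension is finite, with a basis of continuous, linearly independent functions that remain linearly independent in $L^2$; hence the Gram matrix of such a basis under $\angl{\cdot,\cdot}_Y$ is invertible, so evaluation against the basis produces $\dim Z(\Lambda)$ continuous functionals on $Y$ whose common kernel is $Z(\Lambda)^\perp$, of codimension exactly $\dim \ker \mc{L}(\Lambda)$ in $Y$. Since $\img \mc{L}(\Lambda)$ is closed (Fredholmness) and of the same codimension (index zero), the earlier inclusion is in fact equality.

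The main obstacle is really just securing the index-zero part: Fredholmness in Hölder spaces for a Dirichlet-Robin problem is classical, but the index itself requires either the mode-by-mode Sturm-Liouville argument sketched above or a continuous homotopy to a model operator whose index is computable; the range identification is then essentially a formal consequence of \Cref{lemma:L_symmetry} and dimension counting.
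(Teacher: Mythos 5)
The paper does not actually prove this lemma: it records only that the inclusion $\img{\mc{L}(\Lambda)} \subset Z(\Lambda)^\perp$ is an immediate corollary of \Cref{lemma:L_symmetry} and then refers to \cite{Wahlen06Steady} for the rest, so your proposal is supplying an argument the paper omits. Your overall architecture is sound and is the natural one: the forward inclusion from \Cref{lemma:L_symmetry} coincides with the paper's remark; the identification of $\dim Z(\Lambda)$ with $\dim\ker{\mc{L}(\Lambda)}$ via injectivity of $\Phi \mapsto (\mc{T}_1(\Lambda)\Phi,\Phi)$ is correct; and the concluding dimension count (two closed subspaces of equal finite codimension, one contained in the other, must coincide) legitimately reduces everything to Fredholmness of index zero. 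Note also that semi-Fredholmness (finite-dimensional kernel, closed range) is exactly what the paper later proves for the general operator in \Cref{lemma:semi_fredholm_L} via the Schauder estimate, so that part needs no separate treatment. The one soft spot is the sentence ``summing gives total index zero'': mode-by-mode solvability of the Sturm--Liouville problems does not directly yield surjectivity onto the complement of $Z(\Lambda)$ in the H\"older space $Y$, because trigonometric polynomials are not dense in $C^{k,\beta}$ with respect to the H\"older norm, so one cannot simply superpose the modal solutions and pass to a limit in $X_2$. The standard repairs are either the homotopy to a model operator that you mention (invoking stability of the index for semi-Fredholm operators, as the paper itself does later for $\mc{L}(tw,\Lambda)$), or to first establish index zero in an $L^2$/Sobolev setting --- where the Fourier decomposition and the symmetry of the underlying bilinear form make the problem self-adjoint and the index computation clean --- and then transfer to $X_2$ and $Y$ by Schauder regularity. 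With that step made precise, your argument is complete and, if anything, more self-contained than the paper's citation.
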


        We omit the proof of \Cref{lemma:fredholm_property_trivial}, opting only to motivate the result by noting that the inclusion
        \[
            \img{\mc{L}(\Lambda)} \subset Z(\Lambda)^\perp
        \]
        is an immediate corollary of \Cref{lemma:L_symmetry}. The opposite inclusion is less trivial. See for instance \cite{Wahlen06Steady} for a proof in a similar setting.

    \subsection{Transversality and local bifurcation}

        An additional benefit of introducing the kernel equation \eqref{eq:kernel_equation} is that the transversality condition for local bifurcation, appearing in the Crandall--Rabinowitz theorem, can be expressed using a a differentiated version of this equation. To prove this, we exploit the characterization of $\img{\mc{L}(\Lambda)}$ given in \Cref{lemma:fredholm_property_trivial}.

        \begin{proposition}[Transversality condition]
            \label{prop:transversality_condition}
            Suppose that $\Lambda \in \mc{U}$, and that $M(\Lambda) = \{n\}$ for some $n \in \N_0$. Then
            \begin{equation}
                \label{eq:transversality_definition}
                D_{w\mu}\mc{F}(0,\Lambda)\mc{T}(\Lambda)\Phi_n \notin \img{D_w\mc{F}(0,\Lambda)}
            \end{equation}
            if and only if the transversality condition
            \begin{equation}
                \label{eq:transversality_condition}
                l_\mu(n^2\kappa^2,\Lambda) \neq r_\mu(\Lambda)
            \end{equation}
            is satisfied. Here, the functions $l$ and $r$ are those defined in \eqref{eq:kernel_l_definition} and \eqref{eq:kernel_r_definition}, respectively, and the subscripts denote partial derivatives.
        \end{proposition}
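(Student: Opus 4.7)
The plan is to reduce the transversality condition to a concrete orthogonality condition via \Cref{lemma:fredholm_property_trivial}, use the $\mc{T}$-isomorphism to transfer derivatives from $\mc{F}$ to $\mc{L}$, and then identify the resulting inner product as (a nonzero multiple of) $l_\mu(n^2\kappa^2,\Lambda)-r_\mu(\Lambda)$.

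First, because $\mc{T}(\Lambda)$ is an isomorphism, $\img{D_w\mc{F}(0,\Lambda)} = \img{\mc{L}(\Lambda)}$, and by \Cref{lemma:fredholm_property_trivial} this image is exactly $Z(\Lambda)^\perp$ with respect to the inner product \eqref{eq:Y_inner_product}. Since $M(\Lambda)=\{n\}$, $\ker{\mc{L}(\Lambda)}$ is spanned by $\Phi_n$, so $Z(\Lambda)$ is the line spanned by $(\mc{T}_1(\Lambda)\Phi_n,\Phi_n)$. Consequently, condition \eqref{eq:transversality_definition} is equivalent to
\[
    \angl*{(\mc{T}_1(\Lambda)\Phi_n,\Phi_n),\,D_{w\mu}\mc{F}(0,\Lambda)\mc{T}(\Lambda)\Phi_n}_Y \neq 0.
\]
Next, I differentiate the relation $\mc{L}(\Lambda)=D_w\mc{F}(0,\Lambda)\mc{T}(\Lambda)$ with respect to $\mu$, giving
\[
    D_{w\mu}\mc{F}(0,\Lambda)\mc{T}(\Lambda)\Phi_n = \mc{L}_\mu(\Lambda)\Phi_n - D_w\mc{F}(0,\Lambda)\mc{T}_\mu(\Lambda)\Phi_n.
\]
The second summand lies in $\img{D_w\mc{F}(0,\Lambda)} = Z(\Lambda)^\perp$ and therefore does not contribute. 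Transversality thus reduces to showing
\[
    \angl*{(\mc{T}_1(\Lambda)\Phi_n,\Phi_n),\,\mc{L}_\mu(\Lambda)\Phi_n}_Y \neq 0.
\]

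Using the explicit form \eqref{def:L_trivial} together with $\mc{T}_1(\Lambda)\Phi = -\Phi|_S/\lambda$, I will compute the two components of $\mc{L}_\mu(\Lambda)\Phi_n$ directly: $(\mc{L}_1)_\mu(\Lambda)\Phi_n = \gamma'(\mu)\Phi_n|_S$ and $(\mc{L}_2)_\mu(\Lambda)\Phi_n = \gamma''(\overline{\psi})\overline{\psi}_\mu \Phi_n$. Inserting $\Phi_n(x,s)=\cos(n\kappa x)u(s;n^2\kappa^2)$ and carrying out the $x$-integration (which produces a positive factor $P>0$ independent of the remaining quantities), the inner product collapses to
\[
    -P\left[\frac{\gamma'(\mu)}{\lambda}u(1;n^2\kappa^2)^2 - \int_0^1 \gamma''(\overline{\psi}(s))\overline{\psi}_\mu(s)u(s;n^2\kappa^2)^2\,ds\right].
\]

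The final, and most delicate, step is to recognize the bracketed quantity as $u(1;n^2\kappa^2)^2(l_\mu(n^2\kappa^2,\Lambda)-r_\mu(\Lambda))$. Since $r_\mu(\Lambda)=-\gamma'(\mu)/\lambda$ by direct differentiation of \eqref{eq:kernel_r_definition}, this amounts to showing
\[
    l_\mu(n^2\kappa^2,\Lambda) = -\frac{1}{u(1;n^2\kappa^2)^2}\int_0^1 \gamma''(\overline{\psi}(s))\overline{\psi}_\mu(s)u(s;n^2\kappa^2)^2\,ds.
\]
To obtain this, I differentiate the initial value problem \eqref{eq:u_initial_value_problem} in $\mu$, which gives an inhomogeneous linear equation for $u_\mu$ with forcing term $-\gamma''(\overline{\psi})\overline{\psi}_\mu u$ and vanishing initial data. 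Solving by variation of parameters against a Wronskian-normalized complementary solution $v(\cdot;z)$ (with $v(0;z)=1,\ v'(0;z)=0$), the boundary combination $u_\mu'(1;z)u(1;z)-u'(1;z)u_\mu(1;z)$ telescopes: the $A\coloneqq\int_0^1 vu\gamma''(\overline{\psi})\overline{\psi}_\mu\,ds$ terms cancel and only the $B\coloneqq\int_0^1 u^2\gamma''(\overline{\psi})\overline{\psi}_\mu\,ds$ term survives, multiplied by the constant Wronskian $W(u,v)=-1$. Dividing by $u(1;n^2\kappa^2)^2$, which is nonzero since $l(n^2\kappa^2,\Lambda)=r(\Lambda)$ is finite, yields the desired formula. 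This identifies the inner product with $-P u(1;n^2\kappa^2)^2(l_\mu(n^2\kappa^2,\Lambda)-r_\mu(\Lambda))$, so it is nonzero precisely when \eqref{eq:transversality_condition} holds. The main obstacle is the variation-of-parameters calculation and the cancellation of the $A$-terms; everything else is bookkeeping.
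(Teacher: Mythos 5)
Your proposal is correct and follows essentially the same route as the paper: reduce \eqref{eq:transversality_definition} to $\mc{L}_\mu(\Lambda)\Phi_n \notin \img{\mc{L}(\Lambda)}$ via the $\mu$-derivative of \eqref{eq:L_DF_relationship}, convert this to an orthogonality condition against $(\mc{T}_1(\Lambda)\Phi_n,\Phi_n)$ using \Cref{lemma:fredholm_property_trivial}, and identify the resulting integral with $u(1;n^2\kappa^2)^2\bigl(l_\mu(n^2\kappa^2,\Lambda)-r_\mu(\Lambda)\bigr)$. The only cosmetic difference is that the paper obtains the identity for $l_\mu$ by differentiating \eqref{eq:u_initial_value_problem} in $\mu$, multiplying by $u$, and integrating by parts (the Lagrange/Wronskian identity), whereas you solve the inhomogeneous equation for $u_\mu$ explicitly by variation of parameters --- the same cancellation, arrived at slightly more laboriously.
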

        \begin{proof}
            We first observe that by the identity
            \[
                D_{w\mu}\mc{F}(0,\Lambda)\mc{T}(\Lambda) + D_w\mc{F}(0,\Lambda)\mc{T}_\mu(\Lambda) = \mc{L}_\mu(\Lambda),
            \]
            which follows from \eqref{eq:L_DF_relationship}, we have that the condition
            \[
                \mc{L}_\mu(\Lambda)\Phi_n \notin \img{\mc{L}(\Lambda)}
            \]
            is equivalent to \eqref{eq:transversality_definition}. Further, this condition is, in turn, equivalent to
            \[
                \angl{\mc{L}_\mu(\Lambda)\Phi_n,(\mc{T}_1(\Lambda)\Phi_n,\Phi_n)}_Y \neq 0,
            \]
            or
            \begin{equation}
                \label{eq:transversality_proof_equivalent_condition}
                \int_0^1 \gamma''(\overline{\psi}(s))\overline{\psi}_\mu(s)u(s;n^2\kappa^2)^2\,ds \neq \frac{\gamma'(\mu)}{\lambda}u(1;n^2\kappa^2)^2,
            \end{equation}
            by \Cref{lemma:fredholm_property_trivial}.

            We immediately recognize that $\gamma'(\mu)/\lambda = -r_\mu(\Lambda)$ on the right-hand side of \eqref{eq:transversality_proof_equivalent_condition}. The result finally follows by observing that
            \[
                \int_0^1 \gamma''(\overline{\psi}(s))\overline{\psi}_\mu(s)u(s;n^2\kappa^2)^2\,ds = - u(1;n^2\kappa^2)^2 l_\mu(n^2\kappa^2,\Lambda),
            \]
            which is obtained by differentiating \eqref{eq:u_initial_value_problem} with respect to $\mu$, multiplying by $u(\wildcard;n^2\kappa^2)$, and integrating by parts with respect to $s$. Note that $u(1;n^2\kappa^2)$ is necessarily nonzero, since \eqref{eq:kernel_equation} is satisfied by hypothesis.
        \end{proof}
        \begin{remark}
            A completely analogous transversality condition to \eqref{eq:transversality_condition} holds if $\lambda$ is used as the bifurcation parameter instead of $\mu$. The only change needed is to exchange the partial derivatives for ones with respect to $\lambda$.
        \end{remark}

        We can now apply the Crandall--Rabinowitz theorem, see \cite{Crandall71Bifurcation} (or \cite{Buffoni03Analytic} for a more modern exposition), to obtain small-amplitude waves that solve \eqref{eq:F_zero}. This extends the corresponding theorem in \cite{Ehrnstroem11Steady} to more general vorticity distributions than affine. Recall that $\mc{U}$ and $\mc{O}_\lambda$ are sets of permissible parameters and solutions, respectively introduced in \eqref{def:permissible_parameters} and \eqref{def:open_set_slice}. Note that a similar result to \Cref{thm:local_bifurcation}, also for small-amplitude waves, has previously been obtained in \cite{Kozlov14Dispersion}.

        \begin{theorem}[Local bifurcation]
            \label{thm:local_bifurcation}
            Let $\Lambda^* \in \mc{U}$ and suppose that $M(\Lambda^*) = \{n\}$ for some $n \in \N$, so that
            \[
                \ker{D_w\mc{F}(0,\Lambda^*)} = \lspan{\{\mc{T}(\Lambda^*)\Phi_n\}},
            \]
            where $M$ and $\Phi_n$ are as in \Cref{thm:kernel}. If the transversality condition \eqref{eq:transversality_condition} holds, there exists an analytic curve $\mc{K}_{\Lambda^*}^{\textnormal{loc}}=\{(w(t),\mu(t)) : \abs{t} < \varepsilon\}$ of solutions to
            \begin{equation}
                \label{eq:F_zero_starred}
                \mc{F}(w,\mu,\lambda^*) = 0
            \end{equation}
            in $\mc{O}_{\lambda^*}$, with
            \begin{align}
                \notag
                w(t)   & = t\mc{T}(\Lambda^*)\Phi_n + O(t^2) \quad \text{(in $X$)}
                \intertext{and}
                \label{eq:mu_asymptotic}
                \mu(t) & = \mu^* + O(t^2)
            \end{align}
            as $t \to 0$. The solutions on the curve $\mc{K}_{\Lambda^*}^\textnormal{loc}$ have wavenumber $n\kappa$, and
            \begin{equation}
                \label{eq:curve_symmetry}
                \begin{aligned}
                    \mu(-t)                & = \mu(t),                                           \\
                    \eta(-t)(x)            & = \eta(t)\parn*{x + \frac{\pi}{n\kappa}},           \\
                    \hat{\varphi}(-t)(x,s) & = \hat{\varphi}(t)\parn*{x + \frac{\pi}{n\kappa},s}
                \end{aligned}
            \end{equation}
            for all $\abs{t} < \epsilon$ and $(x,s) \in \overline{\Omega}$.

            In addition, there is a neighborhood of $(0,\mu^*) \in \mc{O}_{\lambda^*}$ in which all solutions of \eqref{eq:F_zero_starred}  are either trivial or on the curve.
        \end{theorem}
        \begin{proof}
            The only parts of the theorem that do not follow directly from the Crandall--Rabinowitz theorem are:
            \begin{enumerate}[(i)]
                \item The claim that the solutions have wavenumber $n\kappa$, \label{point:local_bifurcation_wavenumber}
                \item the symmetry properties in \eqref{eq:curve_symmetry}, and \label{point:local_bifurcation_symmetry}
                \item the asymptotics in \eqref{eq:mu_asymptotic}. \label{point:local_bifurcation_asymptotic}
            \end{enumerate}

            Here, \cref{point:local_bifurcation_wavenumber} follows by redefining $\kappa$ such that $n=1$ before applying the Crandall--Rabinowitz theorem, while \cref{point:local_bifurcation_symmetry} can be obtained by observing that $(w,\mu)$ is a solution of \eqref{eq:F_zero_starred} if and only if
            \[
                \parn*{x \mapsto \eta\parn*{x + \frac{\pi}{n\kappa}},(x,s) \mapsto \hat{\varphi}\parn*{x+\frac{\pi}{n\kappa},s},\mu}.
            \]
            is a solution. Lastly, \cref{point:local_bifurcation_asymptotic} is an immediate corollary of the just-proved symmetry of $\mu$ in \ref{point:local_bifurcation_symmetry}.
        \end{proof}

\section{Explicit examples}
    \label{sec:explicit}

    We believe there is value in pausing to record the two simplest forms of vorticity distributions here, for which many aspects of the theory become significantly more explicit.
    \subsection{Constant vorticity}
        If the vorticity distribution is constant; that is, of the form
        \[
            \gamma(t) \equiv \omega_0,
        \]
        for some fixed $\omega_0 \in \R$, then the trivial solutions (solving \eqref{eq:trivial_solution_definition}) are given by the quadratic polynomials
        \[
            \overline{\psi}(s;\Lambda) = \mu + \lambda(s-1) - \frac{1}{2}\omega_0(s-1)^2
        \]
        for every $\Lambda \in \mc{U}$. Consequently
        \[
            \Upsilon(\Lambda) = \mu - \lambda - \frac{1}{2}\omega_0,
        \]
        and we see that
        \[
            u(s;z) = \tilde{u}(s;z) = \frac{\sinh(s\sqrt{z})}{\sqrt{z}},
        \]
        with the notation taken from the proof of \Cref{lemma:properties_of_prufer_angle}, solves \eqref{eq:u_initial_value_problem}. Hence
        \[
            l(z,\Lambda) = v(z)= \frac{\sqrt{z}}{\tanh(\sqrt{z})}
        \]
        in the kernel equation \eqref{eq:kernel_equation}. This can also obtained directly from \Cref{prop:properties_of_l} in this case, as $\rho = R = 0$.

        Bifurcation with respect to $\mu$ is never possible for constant vorticity, as \eqref{eq:transversality_condition} can \emph{never} be satisfied. Therefore \Cref{thm:local_bifurcation} does not apply, in the way it is stated here. This is not unexpected, as changing $\mu$ merely constitutes a constant shift of $\overline{\psi}$, and one may equally well set $\mu \equiv 0$. Bifurcation with respect to $\lambda$, on the other hand, can be done from either of the simple bifurcation points
        \begin{equation}
            \label{eq:constant_bifurcation_points}
            \frac{1}{\lambda_{n,\pm}} = \frac{\omega_0}{2} \pm \sqrt{\parn*{\frac{\omega_0}{2}}^2 + \frac{n\kappa}{\tanh(n\kappa)}},
        \end{equation}
        for any $n \in \N$. This is because the transversality condition with respect to $\lambda$ becomes $\omega_0 \lambda_{n,\pm} \neq 2$, which is always satisfied. Global bifurcation for the constant case, including stagnation, has already been studied in great detail in \cite{Constantin16Global}.

        The trivial solution corresponding to a bifurcation point in \eqref{eq:constant_bifurcation_points} exhibits stagnation if and only if $\omega_0 \neq 0$, the sign is chosen opposite that of $\omega_0$, and
        \[
            \frac{n\kappa}{\tanh(n\kappa)} \geq 1 + \frac{1}{\omega_0^2}.
        \]
        holds. This stagnation presents as a critical line of stagnation points at $s = 1 + \lambda_{n,\pm}/\omega_0$, opening up to a single critical layer of closed streamlines in each minimal period of nearby solutions on $\mc{K}_{\Lambda^*}^\textnormal{loc}$. See \cite[Theorem 4.1]{Wahlen09Steady} or \cite[Theorem 16]{Constantin16Global} for more details.

    \subsection{Affine vorticity}

        As discussed in \cite{Ehrnstroem11Steady}, it is sufficient to instead consider only \emph{linear} vorticity distributions of the form
        \[
            \gamma(t) = \omega_0 t
        \]
        for fixed $\omega_0 \neq 0$. The trivial solutions take the form
        \[
            \overline{\psi}(s;\Lambda) = \mu\cos\parn*{\sqrt{\omega_0}(s-1)} + \lambda \frac{\sin\parn*{\sqrt{\omega_0}(s-1)}}{\sqrt{\omega_0}},
        \]
        and are therefore trigonometric when $\omega_0 > 0$, and hyperbolic when $\omega_0 < 0$. Accordingly,
        \[
            \Upsilon(\Lambda) = \mu\cos\parn*{\sqrt{\omega_0}} + \lambda \frac{\sin\parn*{\sqrt{\omega_0}}}{\sqrt{\omega_0}},
        \]
        and
        \[
            u(s;z) = \tilde{u}(s;z-\omega_0), \qquad l(z,\Lambda) = v(z-\omega_0),
        \]
        with $\tilde{u}$ and $v$ as in the case of constant $\gamma$ above.

        Even with this simplest choice of non-constant vorticity distribution, there is a much richer structure of bifurcation points. Furthermore, the transversality condition for one-dimensional bifurcation is trivially satisfied for $\mu$, so \Cref{thm:local_bifurcation} does apply when the other hypotheses are met. We also mention that this condition reduces to
        \[
            \omega_0 \mu \lambda \neq 2
        \]
        when $\lambda$ is used as the bifurcation parameter. See the works \cite{Ehrnstroem11Steady,Aasen18Traveling,Ehrnstroem15Trimodal} for local bifurcation results with slightly different choices of parameters (which are quite hard to generalize to non-affine $\gamma$), including thorough exploration of the resulting kernel equation.

        A small computation shows that at most one critical line is present at a trivial solution when $\omega_0 > 0$, but that they can have any number of such lines if $\omega_0 < 0$ is sufficiently negative. At a simple bifurcation point, these open up to critical layers in nearby solutions on $\mc{K}_{\Lambda^*}^\textnormal{loc}$, just as for constant vorticity. This means that solutions on the local bifurcation curves can display arbitrarily many critical layers, as described in \cite{Ehrnstroem12Steady}.

\section{Global bifurcation}
    \label{section:global_bifurcation}
    Our local bifurcation result, \Cref{thm:local_bifurcation}, establishes the existence of \emph{local} curves of small solutions to \eqref{eq:F_zero}. We will now proceed to the main event of this paper, which is to use analytic global bifurcation theory, due to Buffoni, Dancer \& Toland \cite{Dancer73Bifurcation,Buffoni03Analytic}, to extend these local curves to \emph{global} curves. The principal result is the following theorem:

    \begin{theorem}[Global bifurcation]
        \label{thm:global_bifurcation}
        The local curve obtained in \Cref{thm:local_bifurcation} can be uniquely extended (up to reparametrization) to a continuous curve
        \[
            \mc{K}_{\Lambda^*} = \{(w(t),\mu(t)) : t \in \R\} \supset \mc{K}_{\Lambda^*}^\textnormal{loc}
        \]
        of solutions to \eqref{eq:F_zero_starred}, such that the following properties hold:
        \begin{enumerate}[(i)]
            \item The curve can be reparametrized analytically in a neighborhood of any point on the curve.
            \item The solutions have wavenumber $n\kappa$, and satisfy the symmetry properties \eqref{eq:curve_symmetry}, for all $t \in \R$.
            \item One of the following alternatives occur:
                \begin{enumerate}[(A)]
                    \item Either
                        \[
                            \min{\brac*{\frac{1}{1+\norm{w(t)}_{X} + \abs{\mu(t)}},\min_{x \in \R}{\parn*{1+\eta(t)}}, \min_{S}{\abs[\big]{\hat{\psi}_s(t)}}}} \to 0
                        \]
                        as $t \to \infty$, \label{item:curve_good_alternative}
                    \item or the curve is closed. \label{item:curve_closed}
                \end{enumerate}
        \end{enumerate}
    \end{theorem}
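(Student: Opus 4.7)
The plan is to apply the analytic global bifurcation theorem of Buffoni--Dancer--Toland \cite{Dancer73Bifurcation,Buffoni03Analytic} to the equation $\mc{F}(w,\mu,\lambda^*)=0$ on $\mc{O}_{\lambda^*}$, starting from the local curve $\mc{K}_{\Lambda^*}^{\textnormal{loc}}$ supplied by \Cref{thm:local_bifurcation}. The hypotheses to verify are: analyticity of $\mc{F}$ (which holds by construction); the Fredholm property of index zero for $D_w\mc{F}(w,\mu,\lambda^*)$ at every solution on the curve; the transversality condition at the bifurcation point (already assumed); and the compactness/properness statement that every closed bounded subset of the solution set in $\mc{O}_{\lambda^*}$ which is positively separated from $\partial\mc{O}_{\lambda^*}$ is relatively compact in $X$. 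Preservation of the wavenumber $n\kappa$ and of the symmetries \eqref{eq:curve_symmetry} along the global curve is automatic: $\mc{F}$ is equivariant under the corresponding reflection/translation action, and the analytic continuation is unique, so these constraints propagate from $\mc{K}_{\Lambda^*}^{\textnormal{loc}}$ to all of $\mc{K}_{\Lambda^*}$.

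For the Fredholm property at a non-trivial solution $(w,\Lambda)$, I would invoke \Cref{thm:T_isomorphism} to write $D_w\mc{F}(w,\Lambda)=\mc{L}(w,\Lambda)\mc{T}(w,\Lambda)^{-1}$, with the regularity needed to define $\mc{L}(w,\Lambda)$ provided by \Cref{thm:analyticity_of_solutions}. It then suffices to show that $\mc{L}(w,\Lambda)\colon X_2\to Y$ is Fredholm of index zero. Since $\mc{L}_2(w,\Lambda)$ is uniformly elliptic with analytic coefficients and $\mc{L}_1(w,\Lambda)$ supplies a first-order oblique boundary condition whose complementing (Shapiro--Lopatinskii) condition holds throughout $\mc{O}$, this falls under Agmon--Douglis--Nirenberg theory. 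The index is constant along the line segment from $(w,\Lambda)$ to $(0,\Lambda)$ in $\mc{O}$ (available by \Cref{rem:open_set}), along which Fredholmness is preserved because ellipticity never degenerates; combined with \Cref{lemma:fredholm_property_trivial} at the endpoint, this gives index zero.

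The main obstacle is the compactness step, and this is where the alternative near-surface flattening announced in the introduction enters. Let $(w_k,\mu_k)$ be a sequence of solutions of $\mc{F}(\cdot,\lambda^*)=0$ with $\|w_k\|_X+|\mu_k|\leq M$, $1+\eta_k\geq\delta$, and $|\hat{\psi}_{k,s}|\geq\delta$ on $S$ for some $\delta>0$. Arzel\`a--Ascoli gives $C^{2,\beta'}$-convergence of $(\eta_k,\hat{\varphi}_k)$ for $\beta'<\beta$; the task is to upgrade this to convergence in $X=C^{2,\beta}_{\kappa,\textnormal{e}}\times\hat{X}_2$. The naive flattening $\Pi$ turns $\mc{F}_2$ into an operator whose leading coefficients depend on first and second derivatives of $\eta_k$, which creates a regularity bottleneck at the free surface that Schauder estimates cannot close. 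To circumvent this, I would pull $\hat{\psi}_k$ back to the physical strip $\Omega_{\eta_k}$ and then apply a different, purely local flattening in a collar near $S_{\eta_k}$ — for instance, one built from the harmonic extension of $\eta_k$ rather than the linear extension in $\Pi$, or a local normal-coordinate straightening — for which the Bernoulli equation becomes a fully nonlinear oblique boundary condition depending on only one derivative of $\eta_k$. Nonlinear boundary Schauder estimates then yield a uniform $C^{3,\beta}$ bound on $\psi_k$, which via the Bernoulli equation propagates to a uniform $C^{3,\beta}$ bound on $\eta_k$, and consequently to $C^{2,\beta}$-compactness of $(\eta_k,\hat{\varphi}_k)$ in $X$. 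The limit automatically lies in $\mc{O}_{\lambda^*}$ by the uniform lower bounds and solves $\mc{F}=0$ by continuity.

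With analyticity, the Fredholm property, transversality, and compactness in hand, the analytic global bifurcation theorem produces a maximal continuous curve $\mc{K}_{\Lambda^*}$ extending $\mc{K}_{\Lambda^*}^{\textnormal{loc}}$, analytically reparametrizable near each point and unique up to reparametrization. Its dichotomy is that either the curve forms a closed loop — which is alternative \ref{item:curve_closed} — or it eventually exits every closed bounded subset of $\mc{O}_{\lambda^*}$ that is positively separated from $\partial\mc{O}_{\lambda^*}$. Unpacking the definition \eqref{eq:open_set} of $\mc{O}$, the latter is exactly the condition that the minimum displayed in alternative \ref{item:curve_good_alternative} tends to zero as $t\to\infty$, which completes the proof.
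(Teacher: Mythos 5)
Your overall architecture coincides with the paper's: apply the Buffoni--Dancer--Toland analytic theorem, obtain the global Fredholm property by combining the $\mc{T}$-isomorphism of \Cref{thm:T_isomorphism} with the regularity from \Cref{thm:analyticity_of_solutions} and a homotopy along the segment $t\mapsto(tw,\Lambda)$ to the trivial solution (where \Cref{lemma:fredholm_property_trivial} pins the index at zero), and reduce alternative (A) to exhaustion of $\mc{O}_{\lambda^*}$ by the sets on which $1+\eta$, $|\hat{\psi}_s|_S|$ and $1/(1+\norm{w}_X+|\mu|)$ are bounded below. Those parts are sound, modulo the minor point that the paper first establishes semi-Fredholmness (finite-dimensional kernel and closed range, via the Schauder estimate \eqref{eq:Schauder_estimate_for_L}) at \emph{every} point of the segment --- not just at solutions --- so that the index is well defined and locally constant along it; "ellipticity never degenerates" alone is not quite the statement one needs.

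The genuine gap is in the compactness step, which is precisely the part the paper identifies as its main new ingredient. You correctly diagnose the bottleneck --- the naive flattening gives no derivative gain at the free surface, because the transformed interior equation has coefficients involving $\eta_{xx}$ and the boundary is only as regular as the unknown --- but the proposed remedy (a harmonic-extension or normal-coordinate collar flattening plus ``nonlinear boundary Schauder estimates'' yielding a uniform $C^{3,\beta}$ bound on $\psi$) is not substantiated and, as stated, runs into the same circularity: any flattening built from $\eta\in C^{2,\beta}$ produces a change of variables of at best that regularity, so a $C^{3,\beta}$ Schauder estimate for $\psi$ near the (still only $C^{2,\beta}$) surface is exactly what is unavailable; normal coordinates are even worse, being only $C^{1,\beta}$. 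The paper's mechanism is different and is what actually closes the derivative count: it applies the \emph{semi-hodograph} transform, but only in the collar $\Omega_\eta\setminus\Omega_{\tilde\eta}$ between the free surface and a nearby streamline $\tilde\eta$ (legitimate there because $\psi_y$ is bounded below near $S_\eta$ by \eqref{eq:psi_y_lower_bound}, even though the wave may stagnate in the interior). This maps the free boundary to the \emph{fixed} line $p=0$; then, since Schauder estimates applied to the height equation \eqref{eq:system_for_h} itself still give no gain, one differentiates in $q$ and observes that $\theta=h_q$ satisfies the problem \eqref{eq:system_for_theta} with a \emph{linear}, uniformly oblique boundary condition and coefficients controlled by the existing $C^{2,\beta}$ bounds, so that \cite[Theorem 6.30]{Gilbarg01Elliptic} (using interior estimates to avoid needing a condition at $p=-\epsilon_j$) yields a uniform $C^{2,\beta}$ bound on $\theta$, hence $C^{3,\beta}$ on $\eta$; only then does the naive flattening give the matching $C^{3,\beta}$ bound on $\hat{\varphi}$. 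Without this (or an equally concrete substitute), your compactness argument does not go through.
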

    \begin{remark}
        Alternative \ref{item:curve_good_alternative} would imply the existence of subsequences $(t_n)_{n \in \N}$, with $t_n \to \infty$, along which at least one of
        \begin{enumerate*}[(i)]
            \item the solutions are unbounded,
            \item the surface approaches the bed, or
            \item surface stagnation is approached,
        \end{enumerate*}
        hold true.

    \end{remark}

    \Cref{thm:global_bifurcation} will follow directly from a slightly modified version of \cite[Theorem 9.1.1]{Buffoni03Analytic}, stated in \cite[Theorem 6]{Constantin16Global}, if we can prove the required Fredholm and compactness properties. Namely, that:
    \begin{enumerate}[(i)]
        \item The derivative $D_w\mc{F}(w,\mu,\lambda^*)$ is Fredholm of index zero not only when $(w,\mu) = (0,\mu^*)$, but on the entire solution set of \eqref{eq:F_zero_starred}.
        \item For an appropriately chosen increasing sequence $(\mc{Q}^{\lambda^*}_j)_{j \in \N}$ of closed and bounded subsets of $\mc{O}_{\lambda^*}$ such that
            \[
                \mc{O}_{\lambda^*} = \bigcup_{j \in \N} \mc{Q}^{\lambda^*}_j,
            \]
            the intersection
            \[
                \brac*{(w,\mu) \in \mc{O}_{\lambda^*} : \mc{F}(w,\mu,\lambda^*)=0} \cap \mc{Q}^{\lambda^*}_j
            \]
            is compact in $X$ for each $j \in \N$.
    \end{enumerate}
    Thus, to establish \Cref{thm:global_bifurcation}, we are left to verify that these two conditions are satisfied. We have already made some of the necessary preparations for this in previous sections.

    \subsection{Verification of the global Fredholm property}

        The central tool we will use to show that the Fréchet derivative of $\mc{F}$ is Fredholm on the solution set of \eqref{eq:F_zero}, is the generalized $\mc{T}$-isomorphism from \Cref{thm:T_isomorphism}. Like for the trivial solutions, this reduces the problem to one for the simpler operator $\mc{L}$. First, we show semi-Fredholmness whenever $\mc{L}$ is well defined. Note that, importantly, $(w,\Lambda)$ need \emph{not} be a solution of \eqref{eq:F_zero} in the lemma.

        \begin{lemma}
            \label{lemma:semi_fredholm_L}
            Let $(w,\Lambda)\in \mc{O}$, with $\mc{O}$ as in \eqref{eq:open_set}, be such that also $\eta \in C_{\kappa,\textnormal{e}}^{3,\beta}(\R)$ and $\hat{\varphi} \in C_{\kappa,\textnormal{e}}^{3,\beta}(\overline{\Omega})$. Then $\mc{L}(w,\Lambda)$ has finite-dimensional kernel and closed range.
        \end{lemma}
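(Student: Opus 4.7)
The plan is to view $\mc{L}(w,\Lambda)$ as an oblique boundary value problem for a second-order linear elliptic operator on the cylindrical domain $\Omega$ (periodic in $x$, so effectively compact), and to deduce semi-Fredholmness from a standard Schauder-type a priori estimate combined with the compact embedding $X_2 \hookrightarrow L^\infty(\Omega)$. Concretely, I would prove an estimate of the form
\[
    \norm{\Phi}_{X_2} \leq C\parn*{\norm{\mc{L}(w,\Lambda)\Phi}_Y + \norm{\Phi}_{L^\infty(\Omega)}}
\]
for all $\Phi \in X_2$, with $C = C(w,\Lambda)$. Once this is in hand, finite-dimensional kernel and closed range follow from a standard functional-analytic argument: any bounded sequence in $\ker \mc{L}(w,\Lambda)$ is relatively compact in $X_2$ by the estimate and the compactness of the embedding into $L^\infty$, which forces the kernel to be finite-dimensional; and the image being closed follows from the same estimate applied modulo the kernel.

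To justify the estimate, the first step is to verify that $\mc{L}_2(w,\Lambda)$ is uniformly elliptic. Reading off the principal part of $\mc{L}_2$ from the expression for $D_{\hat\varphi}\mc{F}_2(w,\Lambda)$, one sees that it is a pullback by the naive flattening transform $\Pi$ of the Laplacian composed with a Jacobian factor; in particular, the principal symbol is positive definite because $1+\eta > 0$ on the compact set $\R/(2\pi/\kappa)\Z$. The coefficients lie in $C^{1,\beta}$, coming from $\eta \in C^{3,\beta}$ and $\hat\varphi \in C^{3,\beta}$ together with analyticity of $\gamma'$, which is more than enough regularity. The second step is to check the Lopatinskii--Shapiro complementing condition at the top boundary $S$: the boundary operator $\mc{L}_1(w,\Lambda)$ is a first-order operator whose coefficient of the normal derivative $\Phi_s$ is $\frac{1+\eta_x^2}{(1+\eta)^2}\hat\psi_s|_S$, which has definite sign by the defining condition $\sgn(\lambda)\hat\psi_s|_S > 0$ of $\mc{O}$. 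Hence the boundary operator is properly oblique, and the complementing condition is satisfied. At the bottom $B$, the Dirichlet condition $\Phi|_B = 0$ built into the definition of $X_2$ is the standard one, for which the complementing condition is automatic.

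With ellipticity plus the complementing conditions at both boundary components plus the periodicity in $x$, the required Schauder estimate for mixed (Dirichlet/oblique) boundary value problems is a classical result; see, for instance, the Agmon--Douglis--Nirenberg framework. This yields precisely the displayed a priori bound. Combined with the compact embedding of $X_2$ into $L^\infty(\Omega)$ and the general Peetre-type lemma, it follows that $\mc{L}(w,\Lambda)$ has finite-dimensional kernel and closed range, completing the proof.

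The main technical point is the verification of the complementing condition and the choice of function spaces that make $\mc{L}$ into a genuine elliptic boundary-value operator; once the observation that $\hat\psi_s|_S$ does not vanish (because $(w,\Lambda) \in \mc{O}$) is in place, the remainder is bookkeeping with standard Schauder theory. In particular, full Fredholmness of index zero would require additionally computing an index, which is deferred to later in the paper (presumably via continuation from the trivial solution, where index zero is already established in \Cref{lemma:fredholm_property_trivial}).
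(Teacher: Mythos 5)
Your proposal is correct and follows essentially the same route as the paper: verify uniform ellipticity of $\mc{L}_2$ and obliqueness of the boundary operator $\mc{L}_1$ (via $\sgn(\lambda)\hat{\psi}_s|_S > 0$ from the definition of $\mc{O}$), obtain the Schauder-type a priori estimate $\norm{\Phi}_{X_2} \leq C(\norm{\Phi}_{L^\infty} + \norm{\mc{L}(w,\Lambda)\Phi}_Y)$, and conclude via compact embedding of Hölder spaces and the standard functional-analytic argument. The paper cites Theorems 6.6 and 6.30 of Gilbarg--Trudinger rather than the Agmon--Douglis--Nirenberg framework, but this is only a difference in references, not in substance.
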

        \begin{proof}
            Recall the definition of $\map{\mc{L}(w,\Lambda)}{X_2}{Y}$ in \eqref{eq:L_definition}. Since
            \begin{equation}
                \label{eq:uniform_ellipticity}
                \begin{aligned}
                    \xi_1^2 -\frac{2s\eta_x}{1+\eta}\xi_1\xi_2 + \frac{1+s^2\eta_x^2}{(1+\eta)^2}\xi_2^2 & = \xi^\intercal \begin{pmatrix}
                        1                        & - \dfrac{s\eta_x}{1+\eta}         \\[0.75em] 
                        -\dfrac{s\eta_x}{1+\eta} & \dfrac{1+s^2\eta_x^2}{(1+\eta)^2}
                    \end{pmatrix}
                    \xi                                                                                                                                             \\
                                                                                                         & \geq \frac{1}{(1+\eta)^2 + 1 + s^2 \eta_x^2}\abs{\xi}^2,
                \end{aligned}
            \end{equation}
            for all $\xi= (\xi_1,\xi_2) \in \R^2$, the operator component $\mc{L}_2(w,\Lambda): X_2\to Y_1$ is strictly elliptic. The inequality in \eqref{eq:uniform_ellipticity} can be deduced from the eigenvalues of the matrix.

            Furthermore, the coefficient in front of $\partial_s$ in $\mc{L}_1(w,\Lambda)$ is uniformly separated away from $0$, by how we defined $\mc{O}$. Combining now the Schauder estimates from Theorems 6.6 and 6.30 in \cite{Gilbarg01Elliptic}, we deduce that there is a constant $C=C(w,\Lambda) \geq 0$ such that
            \begin{equation}
                \label{eq:Schauder_estimate_for_L}
                \norm{\Phi}_{X_2} \leq C(\norm{\Phi}_{L^\infty} + \norm{\mc{L}(w,\Lambda)\Phi}_Y)
            \end{equation}
            for all $\Phi \in X_2$. Standard arguments based on \eqref{eq:Schauder_estimate_for_L} can in turn be used to establish the lemma.
        \end{proof}

        Armed with \Cref{lemma:semi_fredholm_L}, we can prove the desired Fredholm property, by relating $\mc{L}(w,\Lambda)$ and $\mc{L}(\Lambda)$ and then employing the stability of the Fredholm index.

        \begin{theorem}[Global Fredholm property]
            Suppose that $(w,\Lambda) \in \mc{O}$ is a solution of \eqref{eq:F_zero}. Then the operator $D_w \mc{F}(w,\Lambda)$ is Fredholm of index zero.
        \end{theorem}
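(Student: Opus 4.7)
The plan is to reduce the problem to a Fredholm-index question for $\mc{L}(w,\Lambda)$, and then to deform this operator back to the trivial-solution operator $\mc{L}(\Lambda)$ along a continuous path of semi-Fredholm operators, invoking the stability of the Fredholm index. Concretely, I would first note that \Cref{thm:T_isomorphism} gives
\[
    D_w\mc{F}(w,\Lambda) = \mc{L}(w,\Lambda)\,\mc{T}(w,\Lambda)^{-1},
\]
and since $\mc{T}(w,\Lambda):X_2\to X$ is an isomorphism, $D_w\mc{F}(w,\Lambda)$ is Fredholm of index zero if and only if $\mc{L}(w,\Lambda)$ is. By \Cref{thm:analyticity_of_solutions}, $\eta$ and $\hat{\psi}$ are real analytic, so in particular the regularity hypothesis of \Cref{lemma:semi_fredholm_L} is satisfied, and moreover the scaled pair $(tw,\Lambda)$ remains in $\mc{O}$ for all $t\in[0,1]$ by \Cref{rem:open_set}.

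Next, I would consider the homotopy
\[
    [0,1] \ni t \longmapsto \mc{L}(tw,\Lambda) \in \lin(X_2,Y).
\]
Since the coefficients of $\mc{L}$ as displayed in \eqref{eq:L_definition} depend on $\eta, \eta_x, \eta_{xx}, \hat{\psi}, \hat{\psi}_s, \hat{\psi}_{ss}, \hat{\psi}_{xs}$ as rational (indeed polynomial after clearing the factor $(1+t\eta)$) functions, and these quantities are continuous — even analytic — in $t$ with values in the appropriate Hölder spaces, the path is norm-continuous (indeed analytic). By \Cref{lemma:semi_fredholm_L} each $\mc{L}(tw,\Lambda)$ is semi-Fredholm, with finite-dimensional kernel and closed range.

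Now I would invoke Kato's stability theorem: the semi-Fredholm index is locally constant on the set of semi-Fredholm operators. Since $[0,1]$ is connected and the path lies entirely in this set, the index is constant along the homotopy. At $t=0$ we have $\mc{L}(0,\Lambda)=\mc{L}(\Lambda)$, which is Fredholm of index zero by \Cref{lemma:fredholm_property_trivial}. Hence $\mc{L}(w,\Lambda)=\mc{L}(1\cdot w,\Lambda)$ is also Fredholm of index zero, and the claim follows.

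The main obstacle is really just bookkeeping: verifying that the coefficients of $\mc{L}(tw,\Lambda)$ depend continuously on $t$ in the relevant $C^{1,\beta}$/$C^{\beta}$ topology (straightforward once one observes that $1+t\eta$ is uniformly bounded away from zero for $t\in[0,1]$ thanks to $(w,\Lambda)\in\mc{O}$ and the convexity in \Cref{rem:open_set}), and citing the correct form of the index-stability theorem to pass from local constancy to global constancy along the connected parameter interval. Neither step is substantive, but the argument hinges on the fact that \emph{every} operator along the path is semi-Fredholm, not merely the endpoints — and that is exactly what \Cref{lemma:semi_fredholm_L} supplies.
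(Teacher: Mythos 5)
Your proposal is correct and follows essentially the same route as the paper: reduce to $\mc{L}(w,\Lambda)$ via the $\mc{T}$-isomorphism, deform along $t\mapsto\mc{L}(tw,\Lambda)$ using \Cref{rem:open_set} and \Cref{thm:analyticity_of_solutions} so that \Cref{lemma:semi_fredholm_L} applies at every $t$, and conclude by Kato's stability of the semi-Fredholm index from the known index zero at $t=0$. Your extra care in verifying norm-continuity of the path is a point the paper leaves implicit, but the argument is the same.
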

        \begin{proof}
            Recalling \Cref{rem:open_set}, we have $(tw,\Lambda) \in \mc{O}$ for every $t \in [0,1]$. By \Cref{thm:analyticity_of_solutions} used on $(w,\Lambda)$, the necessary regularity for \Cref{lemma:semi_fredholm_L} to apply is also present. Thus $\mc{L}(tw,\Lambda)$ has finite-dimensional kernel and closed range for each $t \in [0,1]$. This is the case even if $(tw,\Lambda)$ need not be a solution of \eqref{eq:F_zero} in general, except at the endpoints. In particular, the operators $\mc{L}(tw,\Lambda)$ are semi-Fredholm, so their Fredholm index is well defined (albeit not necessarily finite), and stable under perturbation.

            We can now use continuity of
            \[
                t \mapsto \ind{\mc{L}(tw,\Lambda)},
            \]
            see \cite[Theorem IV-5.17]{Kato95Perturbation}, to conclude that
            \[
                \ind{\mc{L}(w,\Lambda)} = \ind{\mc{L}(\Lambda)} =0
            \]
            for every solution of \eqref{eq:F_zero}. We have shown that $\mc{L}(w,\Lambda)$ is a Fredholm operator of index zero, and the same is then true for $D_w\mc{F}(w,\Lambda)$ by \eqref{eq:L_DF_relationship}, completing the proof.
        \end{proof}

    \subsection{Verification of the compactness property}
        Inspecting \eqref{eq:open_set}, it is clear that a reasonable definition of the increasing sequences $(\mc{Q}^{\lambda}_j)_{j \in \N}$ is to let
        \[
            \mc{Q}^\lambda_j \coloneqq \brac*{(w,\mu) \in \mc{O}_{\lambda} : 1 +\eta \geq \frac{1}{j}, \sgn(\lambda) \hat{\psi}_s|_S \geq \frac{1}{j}, \norm{w}_X + \abs{\mu} \leq j}
        \]
        for each $j \in \N$. These sets are certainly both closed and bounded, and it is evident that, indeed,
        \[
            \mc{O}_\lambda = \bigcup_{j \in \N} \mc{Q}^{\lambda}_j
        \]
        for every $\lambda \neq 0$.

        We will use Schauder estimates to obtain compactness of the intersections
        \begin{equation}
            \label{eq:Qj_intersection}
            \mc{Q}^{\lambda}_j \cap \{(w,\mu) \in \mc{O}_\lambda : \mc{F}(w,\Lambda) = 0\}
        \end{equation}
        for every $\lambda \neq 0$ and $j \in \N$. In order to do so, we will use a \emph{different} way of flattening \eqref{eq:water_wave_problem_stream} than the naive \eqref{def:flattening_transform}, but only in a neighborhood of the surface. This strategy will first give us control of $\eta$, which in turn can be leveraged to control $\hat{\varphi}$.

        \begin{proposition}[Compactness]
            \label{thm:compactness}
            The intersection in \eqref{eq:Qj_intersection} is a compact subset of $X \times \R$ for every $\lambda \neq 0$ and $j \in \N$.
        \end{proposition}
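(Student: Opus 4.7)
The goal is to show that any sequence $(w_n, \mu_n)_{n \in \N}$ in the intersection admits a subsequence converging in $X \times \R$. Since $|\mu_n| \leq j$ and $\|w_n\|_X \leq j$, compactness of the Hölder embedding immediately yields a subsequence (not relabeled) with $(w_n, \mu_n) \to (w,\mu)$ in $C^{2,\beta'}_{\kappa,\textnormal{e}}(\R) \times C^{2,\beta'}_{\kappa,\textnormal{e}}(\overline{\Omega}) \times \R$ for any $\beta' \in (0,\beta)$. Continuity of $\mc{F}$ into the coarser space shows $\mc{F}(w,\Lambda) = 0$, and the closed inequalities of $\mc{Q}_j^{\lambda}$ pass to the limit. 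The whole task is therefore to upgrade this weak convergence to convergence in the original $C^{2,\beta}$ norm, which amounts to showing that the second derivatives of $\eta_n$ and $\hat{\varphi}_n$ are equicontinuous of Hölder exponent $\beta$.

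The difficulty — and the reason one wants an \emph{alternative} flattening — is that in the naive coordinates of \eqref{def:flattening_transform}, the surface profile $\eta$ appears in the coefficients of $\mc{F}_2$ while being itself only constrained by the first-order trace equation $\mc{F}_1 = 0$. Consequently, a direct Schauder bootstrap on $\mc{F}_2 = 0$ gives $\hat{\varphi}$ no better than the $C^{2,\beta}$ one already has. The plan is to break this deadlock by introducing, in a uniform neighborhood of the free surface, the semi-hodograph (Dubreil–Jacotin) change of variables, which is \emph{allowed} precisely because the definition of $\mc{O}$ and the bound $\sgn(\lambda)\hat{\psi}_s|_S \geq 1/j$ built into $\mc{Q}_j^\lambda$ keep $\psi_y$ positive and bounded away from zero near $S_{\eta_n}$, uniformly in $n$. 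In these coordinates, the Bernoulli equation \eqref{eq:free_surface_bernoulli} becomes a \emph{Dirichlet} condition on the flat boundary $\psi = \mu_n$, and the bulk equation \eqref{eq:helmholtz} becomes a quasilinear elliptic equation for the height $h_n(x,\psi)$ on a flat strip with smooth coefficients depending only on $\gamma$ and on $h_n$ itself.

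\textbf{Step 1 (control of $\eta_n$).} By \Cref{thm:analyticity_of_solutions} each $\hat{\psi}_n$ is analytic up to $S_{\eta_n}$ and the uniform bounds from $\mc{Q}_j^\lambda$ yield a fixed $\delta = \delta(j) > 0$ such that $\sgn(\lambda)\psi_{n,y} \geq c(j) > 0$ on the physical strip $\{\mu_n - \delta \leq \psi_n \leq \mu_n\}$. Inverting $y \mapsto \psi_n(x,y)$ produces $h_n$ bounded uniformly in $C^{2,\beta}$ on $\R \times [\mu_n - \delta, \mu_n]$ (using the Hölder inverse function theorem), satisfying a uniformly elliptic Dubreil–Jacotin equation with Bernoulli-Dirichlet data on $\psi = \mu_n$. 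Applying boundary Schauder estimates (\cite[Thm.~6.6]{Gilbarg01Elliptic} in its nonlinear form, or via linearization about the already-known coarser limit) gives uniform $C^{3,\beta}$ bounds on $h_n$ near the boundary; the trace $\eta_n(x) = h_n(x,\mu_n) - 1$ is therefore uniformly bounded in $C^{3,\beta}(\R)$, and compact embedding produces $\eta_n \to \eta$ in $C^{2,\beta}(\R)$ along a further subsequence.

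\textbf{Step 2 (upgrading $\hat{\varphi}_n$).} With $\eta_n$ now convergent in $C^{2,\beta}(\R)$, the coefficients of the linear elliptic operator $\mc{L}_2(w_n,\Lambda_n)$, and the boundary of $\Omega_{\eta_n}$ itself, converge in $C^{1,\beta}$ (surface) and $C^{0,\beta}$ (bulk). Writing $\hat{\psi}_n - \hat{\psi} = \bar\psi(\Lambda_n) - \bar\psi(\Lambda) + \hat{\varphi}_n - \hat{\varphi}$ and subtracting $\mc{F}_2(w,\Lambda) = 0$ from $\mc{F}_2(w_n,\Lambda_n) = 0$ yields a linear elliptic equation
\[
    \mc{L}_2(w_n,\Lambda_n)(\hat{\varphi}_n - \hat{\varphi}) = R_n
\]
with zero Dirichlet data on $S \cup B$, where $R_n \to 0$ in $C^{0,\beta}(\overline{\Omega})$ because it is a polynomial combination of the $C^{0,\beta}$-convergent quantities $\eta_n-\eta$, $(\eta_n)_x - \eta_x$, $(\eta_n)_{xx} - \eta_{xx}$, $\mu_n - \mu$ and derivatives of $\hat{\psi}_n$ up to order two (which are uniformly bounded). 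The boundary Schauder estimate then gives
\[
    \|\hat{\varphi}_n - \hat{\varphi}\|_{C^{2,\beta}(\overline{\Omega})} \lesssim \|\hat{\varphi}_n - \hat{\varphi}\|_{L^\infty(\overline{\Omega})} + \|R_n\|_{C^{0,\beta}(\overline{\Omega})} \longrightarrow 0,
\]
completing convergence in $X$.

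\textbf{Main obstacle.} The heart of the argument is Step 1: one needs a flattening in which the surface becomes a coordinate plane \emph{and} the Bernoulli condition becomes a second-order-compatible Dirichlet condition, so that the classical Schauder theory on a flat domain can exchange one order of regularity between the bulk unknown and its boundary trace. The semi-hodograph near $S_{\eta_n}$ accomplishes exactly this, and the uniformity in $n$ of the resulting estimates is secured precisely by the two pointwise lower bounds encoded in $\mc{Q}_j^\lambda$, on $1 + \eta$ and on $|\hat{\psi}_s|_S|$. The rest of the compactness proof is a routine repeated application of Arzelà–Ascoli and Schauder estimates.
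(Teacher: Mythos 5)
Your overall architecture matches the paper's: use the semi-hodograph transform in a uniform collar below the free surface (justified by the lower bounds on $1+\eta$ and $\sgn(\lambda)\hat{\psi}_s|_S$ built into $\mc{Q}_j^\lambda$) to gain one derivative for $\eta_n$, and then bootstrap $\hat{\varphi}_n$ through the flattened elliptic equation. However, Step~1 contains a genuine error at exactly the point where the derivative gain must be produced. In the Dubreil--Jacotin variables the Bernoulli equation does \emph{not} become a Dirichlet condition for the height function $h$: since $\abs{\nabla\psi}^2 = (1+h_q^2)/h_p^2$ and $\eta = h-1$, it becomes the fully nonlinear first-order condition $1+h_q^2+(2h-2-\lambda^2)h_p^2=0$ on $p=0$ (the condition $\psi=\mu$ on $S_\eta$ is what \emph{defines} the coordinate plane $p=0$; it imposes nothing on $h$ there). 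Consequently there is no ``second-order-compatible Dirichlet condition'' to feed into the Dirichlet Schauder estimate, and applying Schauder theory directly to the quasilinear system for $h$ only returns the $C^{2,\beta}$ regularity one already has --- this is precisely the deadlock you correctly identified for the naive flattening, and it reappears unresolved in the hodograph variables.

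The missing idea is to differentiate the system with respect to the horizontal variable $q$ and work with $\theta\coloneqq h_q$ instead. Then $\theta$ solves a linear elliptic equation whose boundary condition at $p=0$ is a uniformly \emph{oblique derivative} condition (the coefficient of $\theta_p$ is $-(1+h_q^2)\leq -1$), so the oblique-derivative Schauder estimate \cite[Theorem 6.30]{Gilbarg01Elliptic} --- not Theorem 6.6, which is the Dirichlet case --- yields uniform $C^{2,\beta}$ bounds on $\theta_n=\partial_x\eta_n$ on a half-width strip (interior estimates handle the artificial boundary $p=-\epsilon_j$), hence uniform $C^{3,\beta}$ bounds on $\eta_n$. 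With that repair, your Step~2 (which differs cosmetically from the paper's --- you subtract two solutions and estimate the difference, whereas the paper derives a uniform $C^{3,\beta}(\overline{\Omega})$ bound on $\hat{\psi}_n$ and invokes compact embedding --- but is equivalent in substance) goes through.
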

        \begin{proof}
            Without loss of generality, we will assume that $\lambda > 0$, which fixes the sign of $\hat{\psi}_s|_S$. Let $(w,\mu)$ be any point of $\mc{Q}^{\lambda}_j$ such that $(w,\Lambda)$ solves \eqref{eq:F_zero}.
            Pull back $\hat{\psi}$ to $\psi = \hat{\psi} \circ \Pi$ on $\Omega_\eta$ using the naive flattening transform from \eqref{def:flattening_transform}, and observe that
            \begin{equation}
                \label{eq:psi_y_lower_bound_surface}
                \psi_y(x,1+\eta(x)) = \frac{\hat{\psi}_s(x,1)}{1+\eta(x)} \geq \frac{1/j}{1+j} \geq \frac{1}{(1+j)^2}
            \end{equation}
            for all $x \in \R$. Also,
            \begin{equation}
                \label{eq:psi_yy_bound}
                \abs{\psi_{yy}(x,y)} = \frac{\abs{\hat{\psi}_{ss}(x,y/(1+\eta(x)))}}{(1+\eta(x))^2} \leq (1+j)^3
            \end{equation}
            for all $(x,y) \in \Omega_\eta$. Together, \eqref{eq:psi_y_lower_bound_surface} and \eqref{eq:psi_yy_bound} imply the lower bound
            \begin{equation}
                \label{eq:psi_y_lower_bound}
                \psi_y(x,y) \geq \frac{1}{2(1+j)^2}
            \end{equation}
            whenever
            \[
                y \geq 1+ \eta(x) - \frac{1}{2(1+j)^5},
            \]
            through the mean value theorem.

            Furthermore
            \[
                \psi\parn*{x,1+\eta(x)-\frac{1}{2(1+j)^5}} - \mu \leq -\epsilon_j, \quad \epsilon_j \coloneqq \frac{1}{4(1+j)^7},
            \]
            and so we deduce the existence of a streamline $\tilde{\eta} \in X_1$ satisfying both
            \[
                - \frac{1}{2(1+j)^5} \leq \tilde{\eta}(x) - \eta(x) < 0
            \]
            and
            \[
                \psi(x,1+\tilde{\eta}(x)) - \mu = -\epsilon_j
            \]
            for all $x \in \R$.

            \begin{figure}[htb]
                \centering
                \begin{tikzpicture}[use Hobby shortcut]
    \tikzstyle{fade}=[fill,pattern color=black!50,pattern=north east lines,draw=none,path fading=south]
    
    \draw[thick,dashed,path fading=south] (-5,2.6) -- (-5,2);
    \draw[thick,dashed,path fading=south] (5,2.6) -- (5,2);
    
    \fill[fade] ([in angle=180, Hobby finish, designated Hobby path=next, out angle=0]-5,2.6) ..
    (0,3.4) ..
    ([in angle=180, Hobby finish, designated Hobby path=next, out angle=-90]5,2.6) ..                      
    ([in angle=90, Hobby finish, designated Hobby path=next, out angle=180]5,2) ..
    (0,2) ..
    ([in angle=0, Hobby finish, designated Hobby path=next, out angle=90]-5,2) ..
    (-5,2.6);

        \filldraw[thick,fill=gray!30,draw=black] ([in angle=180, Hobby finish, designated Hobby path=next, out angle=0]-5,3) ..
                     (0,4) ..
                     ([in angle=180, Hobby finish, designated Hobby path=next, out angle=-90]5,3) ..                      
                     ([in angle=90, Hobby finish, designated Hobby path=next, out angle=180]5,2.6) ..
                     (0,3.4) ..
                     ([in angle=0, Hobby finish, designated Hobby path=next, out angle=90]-5,2.6) ..
                     (-5,3);
        \draw[->,>=Stealth] (2,4) node[right] {$\Omega_\eta \setminus \Omega_{\tilde{\eta}}$} .. (1,4) .. (0,3.7);

        \draw[thick,->] (0,1.5) -- node[midway, right] {$\Gamma$} (0,0.5);
        
    \fill[fade] (-5,-0.5) --
    (5,-0.5) --
    (5,-1) --
    (-5,-1) --
    cycle;
    \draw[thick,dashed,path fading=south] (-5,-0.5) -- (-5,-1);
    \draw[thick,dashed, path fading=south] (5,-0.5) -- (5,-1);
    
    \filldraw[thick,fill=gray!30,draw=black] (-5,0) --
                                             (5,0) --
                                             (5,-0.5) --
                                             (-5,-0.5) --
                                             cycle;
                                             
    \draw[->,>=Stealth] (-2.5,0.25) node[left] {$R_{\epsilon_j}$} .. (-2,0.25) .. (-1,-0.25);

\end{tikzpicture}
                \caption{The setup in the proof of \Cref{thm:compactness}. We ignore whatever is occurring outside of $\Omega_{\eta}\setminus \Omega_{\tilde{\eta}}$.}
                \label{fig:semi_hodograph}
            \end{figure}
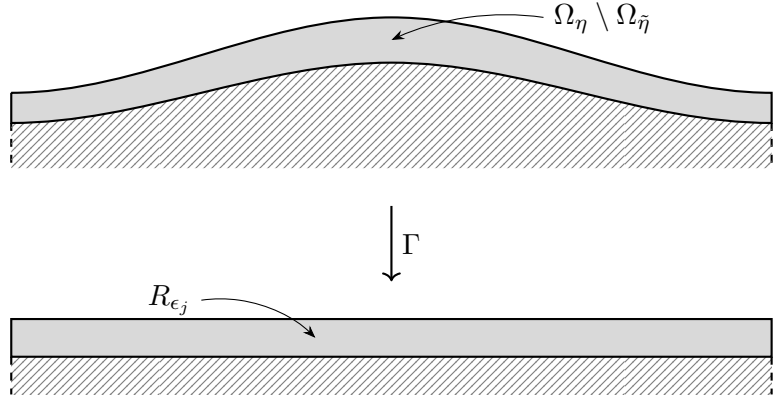

            If we proceed to define the strips
            \[
                R_{\epsilon} \coloneqq \R \times (-\epsilon,0)
            \]
            for $\epsilon > 0$, then the semi-hodograph transform $\map{\Gamma}{\Omega_\eta \setminus \Omega_{\tilde{\eta}}}{R_{\epsilon_j}}$ defined by
            \[
                \Gamma(x,y) = (x,\psi(x,y)-\mu)
            \]
            is a diffeomorphism between the closures of the same sets. The transform has an inverse of the form
            \begin{equation}
                \label{eq:definition_h}
                \Gamma^{-1}(q,p) = (q,h(q,p)),
            \end{equation}
            where the choice of letters for the variables is a matter of convention.

            It is well known that the function $\map{h}{R_{\epsilon_j}}{\R}$ implicitly defined by \eqref{eq:definition_h}, see e.g. \cite{Dubreil-Jacotin34Sur,Constantin04Exact}, satisfies the second order quasi-linear elliptic boundary value problem
            \begin{equation}
                \label{eq:system_for_h}
                \renewcommand\arraystretch{1.33} 
                \begin{array}{ccl}
                    \mc{S}(h) h = \gamma(p+\mu)h_p^3   &  & \text{in $R_{\epsilon_j}$}, \\
                    1+h_q^2 + (2h-2-\lambda^2)h_p^2= 0 &  & \text{on $p=0$,}
                \end{array}
            \end{equation}
            where we have introduced the differential operator
            \[
                \mc{S}(h) \coloneqq h_p^2\partial_q^2 - 2h_q h_p \partial_{q}\partial_p + (1+h_q^2)\partial_p^2.
            \]
            For the same reasons as in earlier results (such as \cite{Constantin04Exact}), Schauder estimates applied directly to \eqref{eq:system_for_h} do not help us here. However, while \eqref{eq:system_for_h} is not suitable, it follows by straight-forward differentiation that the partial derivative $\theta \coloneqq h_q$ ($h$ is again actually analytic on $\overline{R}_{\epsilon_j}$ due to \Cref{thm:analyticity_of_solutions}) satisfies a similar boundary value problem, namely
            \begin{equation}
                \label{eq:system_for_theta}
                \renewcommand\arraystretch{1.33} 
                \begin{array}{ccl}
                    \mc{S}(h)\theta = 2h_q(h_{qp}^2 - h_{pp}h_{qq}) + 3\gamma(p+\mu)h_p^2 h_{qp} &  & \text{in $R_{\epsilon_j}$}, \\
                    h_p^3 \theta + h_p h_q \theta_q - (1+h_q^2)\theta_p=0                        &  & \text{on $p=0$,}
                \end{array}
            \end{equation}
            which \emph{can} be used.

            Just like in \eqref{eq:uniform_ellipticity}, we have strict ellipticity in \eqref{eq:system_for_theta}, because
            \[
                h_p^2 \xi_1^2- 2h_ph_q \xi_1\xi_2+ (1+h_q^2)\xi_2^2    \geq \frac{h_p^2}{1+h_p^2 + h_q^2}\abs{\xi}^2
            \]
            for all $\xi=(\xi_1,\xi_2) \in \R^2$. Moreover, it can be shown that (where the exact constant is unimportant)
            \[
                \frac{h_p^2}{1+h_p^2 + h_q^2} \geq \frac{1}{(j+1)^6},
            \]
            whence the strict ellipticity is uniform in the choice of $(w,\mu) \in \mc{Q}_j^\lambda$. The boundary condition at $p=0$ in \eqref{eq:system_for_theta} is also trivially uniformly oblique, in the sense of \cite[(6.76)]{Gilbarg01Elliptic}, because
            \[
                1 + h_q^2 \geq 1,
            \]
            and this is again obviously uniform in the choice of $(w,\mu) \in \mc{Q}^{\lambda}_j$.

            Suppose that $(w_n,\mu_n)_{n \in \N}$, where as usual $w_n = (\eta_n, \hat{\varphi}_n)$, is a sequence in $\mc{Q}^{\lambda}_j$ such that
            \[
                \mc{F}(w_n,\mu_n,\lambda) = 0
            \]
            for all $n \in \N$. Using \eqref{eq:definition_h}, we may define an associated sequence $(h_n)_{n \in \N}$ of functions $\map{h_n}{R_{\epsilon_j}}{\R}$ solving \eqref{eq:system_for_h} with $\mu = \mu_n$. Due to the bounds in the definition of $\mc{Q}^{\lambda}_j$, and the uniform lower bound on $\psi_y$ from \eqref{eq:psi_y_lower_bound}, we infer that this sequence is bounded in $C_{\kappa,\textnormal{e}}^{2,\beta}(\overline{R}_{\epsilon_j})$. As the terms of the corresponding sequence $(\theta_n)_{n \in \N} = (\partial_q h_n)_{n \in \N}$ satisfies \eqref{eq:system_for_theta} with $\mu=\mu_n$ for each $n \in \N$, we deduce from the Schauder estimate in \cite[Theorem 6.30]{Gilbarg01Elliptic} that $(\theta_n)_{n \in \N}$ is bounded in $C_{\kappa,\textnormal{o}}^{2,\beta}(\overline{R}_{\epsilon_j/2})$. Note that, crucially, we do not need a boundary condition at $p =-\epsilon_j$, as we can use \emph{interior} estimates on $R_{\epsilon_j}$ to procure a global estimate on the smaller rectangle.

            It follows that
            \[
                (\partial_x \eta_n)_{n \in \N} = (\theta_n(\wildcard,0))_{n \in \N}
            \]
            is bounded in $C_{\kappa,\textnormal{o}}^{2,\beta}(\R)$, and therefore that the sequence $(\eta_n)_{n \in \N}$ is bounded in $C_{\kappa,\textnormal{e}}^{3,\beta}(\R)$. Recall next that \eqref{eq:water_wave_problem_flattened} is strictly elliptic due to \eqref{eq:uniform_ellipticity}. This ellipticity is again uniform in the choice of $(w,\mu) \in \mc{Q}^{\lambda}_j$, because
            \[
                \frac{1}{(1+\eta)^2 +1 + s^2 \eta_x^2} \geq \frac{1}{(1+j)^2 + 1 + j^2} \geq \frac{1}{2(1+j)^2}
            \]
            on $\overline{\Omega}$ by our definition of $\mc{Q}^{\lambda}_j$.

            Having gained an additional bounded derivative for the surface profile, we can now use the Schauder estimate in \cite[Theorem 6.6]{Gilbarg01Elliptic} on \eqref{eq:water_wave_problem_flattened} to infer that $(\hat{\psi}_n)_{n \in \N}$, and therefore $(\hat{\varphi}_n)_{n \in \N}$, is bounded in $C_{\kappa,\textnormal{e}}^{3,\beta}(\overline{\Omega})$. Finally, by boundedness of $(\mu_n)_{n \in \N}$ and the usual compact embedding of Hölder spaces \cite[Lemma 6.36]{Gilbarg01Elliptic}, we conclude that the sequence $(w_n,\mu_n)_{n \in \N}$ has a convergent subsequence. The intersection \eqref{eq:Qj_intersection} is therefore compact, concluding the proof.
        \end{proof}

\section{Properties of the global curve}

    It is highly desirable to narrow down the alternatives in \Cref{thm:global_bifurcation}. In particular, like in most global bifurcation results, one would typically like to rule out alternative \ref{item:curve_closed} entirely. The reason for this, of course, is that this would guarantee the existence of truly large-amplitude solutions to \eqref{eq:F_zero}. That is not to say that solutions on a hypothetical closed curve are small, but they do not ``blow up'' in the same way that solutions do in the event that \ref{item:curve_good_alternative} occurs.

    Due to the loss of global maximum principles on $\Omega_\eta$, ruling out alternatives in \Cref{thm:global_bifurcation} is significantly more difficult than for non-stagnant waves (and perhaps even impossible in general without making further assumptions on $\gamma$). Alternative \ref{item:curve_closed} was substantially ruled out for the special case of waves with constant vorticity in \cite{Constantin16Global}, albeit in an entirely different framework, but it is not at all clear how to generalize this to more general vorticity distributions.

    Various nodal properties are preserved \emph{near} the surface on the local curve $\mc{K}_{\Lambda^*}^\textnormal{loc}$, but extending these near-surface properties to all of $\mc{K}_{\Lambda^*}$ is challenging. One may imagine an argument akin to the one in the proof of \Cref{thm:compactness}, where one works in a neighborhood of the surface, only this time for the nodal properties. Despite much effort, we have not been able to obtain conclusive results from this, and certainly nothing close to being able to rule out alternative \ref{item:curve_closed}. An additional challenge, is that there are \emph{more} trivial solutions of \eqref{eq:water_wave_problem_stream} than those we have described; namely those with a flat surface $\eta \neq 0$. It could indeed be that the curve loops back to the original bifurcation point, by first passing through one of these trivial solutions.

    For these reasons, and more, we will leave the matter of exploring these alternatives to future work. Still, there are certain things we can quite easily conclude, and which we find worth mentioning. For instance, from \eqref{eq:free_surface_bernoulli} we can immediately infer the upper bound
    \begin{equation}
        \label{eq:eta_upper_bound}
        \eta(t) < \frac{1}{2}(\lambda^*)^2
    \end{equation}
    for every curve parameter $t \in \R$, where the inequality is strict since there are no stagnation points on the surface. Moreover, we note that \emph{if} $\mc{K}_{\Lambda^*}$ were to have a subsequence ending in a wave of greatest height, with a stagnation point at the crest, then it would necessarily have surface deviation $\eta$ precisely equal to the right-hand side of \eqref{eq:eta_upper_bound} at the crest.

    \bibliography{bibliography}
\end{document}